\documentclass[11pt]{amsart}
\usepackage{amsmath, amssymb, amsthm}
\usepackage{geometry}
\usepackage{enumitem}
\usepackage[utf8]{inputenc}
\usepackage{hyperref}
\usepackage{newunicodechar}
\newunicodechar{−}{-}
\usepackage{amssymb}
\usepackage{enumitem}
\usepackage[dvipsnames]{xcolor}
\usepackage[normalem]{ulem}

\newunicodechar{，}{,}
\newcommand{\EQN}[1]{\begin{align*}#1\end{align*}}

\newtheorem{theorem}{Theorem}
\newtheorem{lemma}[theorem]{Lemma}
\newtheorem{proposition}[theorem]{Proposition}
\newtheorem{definition}[theorem]{Definition}
\newtheorem{remark}[theorem]{Remark}
\usepackage{color}
\newcommand{\R}{\mathbb{R}}

\newcommand{\Z}{\mathbb{Z}}

\newcommand{\PP}{\mathbb{P}}

\newcommand{\norm}[1]{\left\| #1 \right\|}

\begin{document}

\title[rotating incompressible Navier-Stokes equations]{Well-posedness and vanishing rotational limit for the rotating incompressible Navier-Stokes equations in hybird Besov space}

\author[Z.~Guo]{Zihua Guo}

\address{Zihua Guo, School of Matheamtics, Monash University, Melbourne, VIC 3800, Australia.}
\email{zihua.guo@monash.edu}

\author[Z. Song]{Zihao Song}
\address{Zihao SOng, School of Mathematics, Nanjing University of Aeronautics and Astronautics, Nanjing 211106, Jiangsu, China}
\email{szh1995@nuaa.edu.cn}

\author[M. Yang]{Minghua Yang}
\address{Minghua Yang, School of Information Management and Mathematics, Jiangxi University of Finance and
Economics, Nanchang, 330032, China}
\email{minghuayang@jxufe.edu.cn}

\thispagestyle{empty}
\begin{abstract}
We establish the well-posedness of the 3D rotating incompressible Navier-Stokes equations
with critical initial data  $u_{0,\Omega}\in X_{0,q,p}^{\Omega}$ for $p<5$, where $X_{0,q,p}^{\Omega}$ is defined by the norm
\begin{equation*}
\begin{aligned}
&\|u_{0,\Omega}\|_{X_{0,q,p}^{\Omega}}:= \Omega^{3- \frac{6}{q}}\|u_{0,\Omega}\|_{\dot{B}_{q,\infty}^{-7+\frac{15}{q}}}^{\ell_\Omega}
+\|u_{0,\Omega}\|_{\dot{B}_{p,\infty}^{-1+\frac{3}{p}}}^{h_\Omega}.
\end{aligned}
\end{equation*}
This extends the previous results by Chen, Miao, and Zhang (\cite{CMZ2013}).
The main ingredients are a new global-in-time dissipative-dispersive estimate for the Stokes--Coriolis semigroup and corresponding bilinear estimates. Furthermore, we establish the vanishing rotational limit for the 3D rotating Navier-Stokes equations as $\Omega\rightarrow 0^{+}$.
\end{abstract}

\maketitle

\section{Introduction}\setcounter{equation}{0}

In this paper, we study the Cauchy problem of the three-dimensional rotating incompressible Navier--Stokes equations:
\begin{equation}\label{rns}
\begin{cases}
\partial_t u_{\Omega} - \Delta u_{\Omega}  + \Omega e_3 \times u_{\Omega}  + u_{\Omega}  \cdot \nabla u_{\Omega}  + \nabla  \pi_{\Omega}  = 0, \\
\nabla \cdot u_{\Omega}  = 0, \\
u_{\Omega} (0,x) = u_{\Omega,0} ,
\end{cases}
\end{equation}
where \(\Omega \in \mathbb{R}^+\) denotes the rotation rate, \(e_3 = (0,0,1)\) is the unit vector along the vertical direction, and the term \(\Omega e_3 \times u_{\Omega} \) accounts for the Coriolis force. The unknowns are the velocity field \(u_{\Omega}  = u_{\Omega} (t,x) = (u_{\Omega1}, u_{\Omega2} ,u_{\Omega3} )\) and the pressure \( \pi =  \pi(t,x)\), with \(u_{\Omega,0} = (u_{\Omega,01}, u_{\Omega,02} , u_{\Omega,03} )\) representing the initial velocity field.

The rotating Navier-Stokes equations model viscous fluid flow within a reference frame that spins at a constant angular velocity. These equations are foundational for geophysical fluid dynamics (e.g., meteorology and oceanography) and engineering applications involving spinning machinery (see \cite{Pedlosky1987, Majda2003}).

\subsection{Literature reviews}
If \(\Omega = 0\),  the system \eqref{rns} reduces to the classical incompressible Navier-Stokes equatioin, which reads
\begin{equation}\label{INS}
\begin{cases}
\partial_t v - \Delta v + v \cdot \nabla v + \nabla \pi = 0, \\
\nabla \cdot v = 0, \\
v(0,x) = v_0(x).
\end{cases}
\end{equation}
The well-posedness theory for the standard three-dimensional Navier--Stokes equations  has been thoroughly explored. We only recall the results in the critical spaces. The equation \eqref{INS} is invariant under the natural scaling transformation: if \((u, \pi)\) is a solution, then for any \(\lambda > 0\), the rescaled functions
\begin{equation}
(u_\lambda(t,x),\, \pi_\lambda(t,x)) = (\lambda u(\lambda^2 t, \lambda x),\, \lambda^2 \pi(\lambda^2 t, \lambda x))
\end{equation}
also satisfy the system with initial data $\lambda u_0(\lambda x)$. The space $X$ is critical in the sense of this invariance means $\norm{u_0}_X\sim \norm{\lambda u_0(\lambda x)}_X$. Here are some canonical critical spaces:
\EQN{
\dot H^{1/2}\subset L^3 \subset \dot B^{3/p-1}_{p,q} (3\leq p<\infty) \subset BMO^{-1}=\dot F^{-1}_{\infty,2} \subset \dot B^{-1}_{\infty,\infty}.
}
Fujita and Kato (\cite{FujitaKato1964, Kato1984}) established global well-posedness for small initial data in \(\dot{H}^{1/2}\) and \(L^3\), respectively. Cannone (\cite{Cannone1997}), Planchon (\cite{Planchon}) and Chemin (\cite{Chemin}) proved the
well-posedness in $\dot B^{3/p -1}_{p,q}$ for
$1 \leq p < \infty, 1 \leq q \leq\infty$. 
Koch and Tataru \cite{KoTa} proved global well-posedness for small data in $BMO^{-1}$. Well-posedness in some other critical spaces were also obtained, e.g. Fourier-Lebesgue space $\mathcal{X}^{-1}$ by Lei-Lin (\cite{Lei-Lin}), and modulation space $M^{-1}_{\infty,1}$ by Iwabuchi (\cite{Iwabuchi}).
Ill-posedness in $\dot B^{-1}_{\infty,q}$ in the sense that the solution map is discontinuous was shown by 
Bourgain and Pavlovic (\cite{BoPa}) for $q=\infty$, and by Yoneda (\cite{Yoneda}) for $q>2$. Taking notice of the embedding $\dot B^{-1}_{\infty,q}\subset BMO^{-1}$ for $1\leq q\leq 2$, however, this solution is ill-posedness in $\dot B^{-1}_{\infty,q}$ for $1\leq q\leq 2$ by Wang (\cite{Wang}) .
Let us emphasize that the critical Besov spaces with negative index allow us to construct solutions with
highly oscillating initial data such like
$$u_0(x) = \sin{\frac{x_3}{\varepsilon}}
(-\partial_2\Psi(x), \partial_1\Psi(x), 0),$$
where $\Psi\in\mathcal{S}(\R^3)$ and $\varepsilon>0$.

It is natural to extend the results to the rotating case. When rotation is present (\(\Omega \neq 0\)), there are also a lot of studies. We refer to the works (\cite{2,3,4,5,6, 15, 20, 9, 16, 21}).
Hieber and Shibata (\cite{HieberShibata2010}) derived global $L^p-L^q$ estimates for $q <2<p$ and $L^q-H^s$ estimates for $q > 3$ for the Stokes--Coriolis semigroup. These estimates enabled them to prove global well-posedness for small data in $H^{\frac{1}{2}}$.
In \cite{CMZ2013}, Chen, Miao, and Zhang established global well-posedness for small initial data within the hybrid Besov space:
\[
\|u_{0}\|_{\dot{B}_{2,\infty}^{\frac{1}{2}}}^{\ell_{\Omega}} + \|u_0\|_{\dot{B}_{p,\infty}^{-1+\frac{3}{p}}} \ll 1,\quad 2 \leq p \leq 4.
\]
Such space naturally accommodates physically relevant data with highly oscillatory initial data. However, compared to the case $\Omega=0$, the range of $p$ for well-posedness results is considerably restricted.

To the best of our knowledge, the problem of global well-posedness in optimal Besov spaces remains open. This is due to the much intricate analysis for the the Stokes--Coriolis system. Mathematically, it is observed that the Coriolis force induces a dispersive effect, which is captured by the oscillatory Fourier multiplier
\[
e^{\pm i \Omega t \frac{D_3}{|D|}} f(x) = \frac{1}{(2\pi)^3} \int_{\mathbb{R}^3} e^{i x \cdot \xi \pm i \Omega t \frac{\xi_3}{|\xi|}} \widehat{f}(\xi) \, d\xi, \quad (t, x) \in \mathbb{R}^+ \times \mathbb{R}^3.
\]
The Stokes--Coriolis semigroup that governs the linear equation behaves like
$e^{t\Delta\pm i\Omega t \frac{D_3}{|D|}}$ which is parabolic-dispersive.
The dispersive estimates, along with their Strichartz-type variants, play a pivotal role in the analysis of the rotating Navier--Stokes equations. A significant challenge arises from the fact that the Stokes--Coriolis semigroup, fails to be uniformly (in time) bounded in \(L^p\) for \(p \neq 2\) (see Theorem 5 and Theorem 6 in \cite{DPV2006}).

The main objective of this work is to study \eqref{rns} in the hybrid Besov space. We prove a new global-in-time $L^q-L^p$ type dissipative-dispersive estimate for the Stokes--Coriolis system. This allows us to extend the well-posedness theory to a broader class of initial data, by exploiting the framework of hybrid Besov spaces in \cite{CMZ2013}. In addition, we establish the vanishing rotational limit (strong convergence) for the three-dimensional rotating Navier–Stokes equations as $\Omega\rightarrow 0^{+}$.

\subsection{Main results}
Before we present the main result, let us introduce the hybrid Besov space. Let $\chi(\xi)$ be a smooth function valued in $[0,1]$ such that $\chi$ is supported in the ball
$\mathbf{B}(0,\frac{4}{3})=\{\xi\in\mathbb{R}^{3}:|\xi|\leq\frac{4}{3}\}$ and $\varphi(\xi)=\chi(\xi/2)-\chi(\xi)$. We define the homogeneous dyadic blocks and homogeneous low-frequency cut-
off operators:
\begin{eqnarray*}
&&\dot{\Delta}_{j}f:=\varphi(2^{-j}D)f=\mathcal{F}^{-1}(\varphi(2^{-j}\xi)\mathcal{F}f), \quad j\in\mathbb{Z}.
\end{eqnarray*}
Then for any $f\in \mathcal{S}'(\R^3)$, we define
$$f^{\ell_{\alpha}}:=\sum_{2^j< \alpha^{\frac{1}{2}}}\dot{\Delta}_{j} f, \quad f^{h_\alpha}:=\sum_{2^j\geq \alpha^{\frac{1}{2}}}\dot{\Delta}_{j} f.$$
Accordingly, we define the hybrid frequency-restricted Besov semi-norms
\begin{equation*}
\|f\|_{\dot{B}_{p,r}^s}^{\ell_\alpha}:=(\sum_{2^j<\alpha^{\frac{1}{2}}}2^{js}\|\dot{\Delta}_jf\|_{L^p})^{\frac{1}{r}};\quad\|f\|_{\dot{B}_{p,r}^s}^{h_\alpha}:=(\sum_{2^j\geq \alpha^{\frac{1}{2}}}2^{js}\|\dot{\Delta}_jf\|_{L^p})^{\frac{1}{r}}.
\end{equation*}

Our first main result of this paper concerns well-posedness in general $L^q-L^p$ space, which is stated as follows:
\begin{theorem}[Well-posedness]\label{wp}
 Let $2\leq q\leq p\leq 2q\leq\infty$, and assume
     \begin{equation*}
        \frac{9}{4q} - \frac{3}{4p} < 1 < \frac{2}{q} + \frac{1}{p}.
    \end{equation*}
Define
\begin{equation}
\begin{aligned}
&\|u_{0,\Omega}\|_{X_{0,q,p}^{\Omega}}:= \Omega^{3- \frac{6}{q}}\|u_{0,\Omega}\|_{\dot{B}_{q,\infty}^{-7+\frac{15}{q}}}^{\ell_\Omega}
+\|u_{0,\Omega}\|_{\dot{B}_{p,\infty}^{-1+\frac{3}{p}}}^{h_\Omega}<\infty
\end{aligned}
\end{equation}
and
\begin{equation}
\begin{aligned}
&\|u\|_{X_{q,p}^{\Omega}}:=\Omega^{\frac{3}{2}-\frac{3}{q}}
\|u\|^{\ell_{\Omega}}_{\tilde{L}_{T}^{\infty}(\dot{B}_{q,\infty}^{-4+\frac{9}{q}})
\cap\tilde{L}_{T}^{1}(\dot{B}_{q,\infty}^{-2+\frac{9}{q}})}+
\|u\|^{h_{\Omega}}_{\tilde{L}_{T}^{\infty}(\dot{B}_{p,\infty}^{-1+\frac{3}{p}})\cap
\tilde{L}_{T}^{1}(\dot{B}_{p,\infty}^{1+\frac{3}{p}})}.
\end{aligned}
\end{equation}

If the initial data $u_{0,\Omega}\in X_{0,q,p}^{\Omega}$, then there exists a \( T  > 0 \) such that \eqref{rns} has a unique solution
 $u_{\Omega}\in X_{q, p}^{\Omega}$.
Moreover, if \( \|u_{0,\Omega}\|_{X_{0,q,p}^{\Omega}} \) is sufficiently small, then the Cauchy problem \eqref{rns} admits a unique global solution $u_{\Omega}\in X_{q, p}^{\Omega}$ for any \( T>0\).
\end{theorem}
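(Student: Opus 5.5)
We will prove Theorem \ref{wp} by a fixed point argument for the Duhamel formulation
\[
u_\Omega(t)=\mathcal{T}_\Omega(t)u_{0,\Omega}-\int_0^t \mathcal{T}_\Omega(t-s)\PP\nabla\cdot(u_\Omega\otimes u_\Omega)(s)\,ds ,
\]
where $\mathcal{T}_\Omega(t)$ is the Stokes--Coriolis semigroup. Its symbol is $\frac12\sum_\pm e^{-t|\xi|^2\pm i\Omega t\xi_3/|\xi|}$ times bounded matrices, homogeneous of degree zero in $\xi$.

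\textbf{Step 1: scaling reduction.} First rescale $u_\Omega(t,x)=\Omega^{1/2}U(\Omega t,\Omega^{1/2}x)$. This maps \eqref{rns} to the case $\Omega=1$, maps the cut-off $2^j\sim\Omega^{1/2}$ to $2^j\sim1$, and turns the weights $\Omega^{3-6/q}$ and $\Omega^{3/2-3/q}$ into $1$. The check is direct: the low-frequency index $-7+15/q$ differs from the critical one $-1+3/q$ by $-6+12/q$, and this is exactly compensated by $\Omega^{3-6/q}=(\Omega^{1/2})^{6-12/q}$. So it suffices to take $\Omega=1$ with the splitting at $|\xi|\sim1$.

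\textbf{Step 2: linear estimates.} On high frequencies ($2^j\ge1$) we ignore dispersion. The symbol is a zero-order multiplier times the heat kernel on an annulus, so we get $\|\dot\Delta_j\mathcal{T}(t)f\|_{L^p}\lesssim e^{-ct2^{2j}}\|\dot\Delta_jf\|_{L^p}$. This gives the usual $\tilde L^\infty\cap\tilde L^1$ maximal-regularity bounds in $\dot B^{-1+3/p}_{p,\infty}$ and $\dot B^{1+3/p}_{p,\infty}$.

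On low frequencies ($2^j<1$) we need the new dissipative-dispersive estimate. Stationary phase for $e^{it\xi_3/|\xi|}$ on the annulus $|\xi|\sim2^j$ gives decay $(1+t)^{-1}$ in $L^1\to L^\infty$. Interpolating with $L^2$ conservation gives
\[
\|\dot\Delta_j\mathcal{T}(t)f\|_{L^{q}}\lesssim (1+t)^{-(1-\frac2q)}2^{3j(1-\frac2q)}e^{-ct2^{2j}}\|\dot\Delta_jf\|_{L^{q'}} .
\]
We then trade the time decay for powers of $2^{-2j}$ via the heat factor: $\sup_t (t2^{2j})^{a}e^{-ct2^{2j}}<\infty$, using $t\gtrsim 2^{-2j}\ge1$. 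This produces the loss of $6(1-2/q)$ derivatives seen in the indices. The result is $\tilde L^\infty(\dot B^{-4+9/q}_{q,\infty})\cap\tilde L^1(\dot B^{-2+9/q}_{q,\infty})$ control of the low part of $\mathcal{T}u_0$ in terms of $\|u_0\|^{\ell}_{\dot B^{-7+15/q}_{q,\infty}}$. In practice it is cleaner to state the low-frequency estimate from an $L^{q'}$-type (or $L^2$-based) input norm to the $L^q$ output; the precise $L^q$-input version requires the $TT^*$ duality argument. I would first prove the single-block bound and then sum.

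\textbf{Step 3: bilinear estimates.} We show $\|B(u,v)\|_{X_{q,p}}\lesssim\|u\|_{X_{q,p}}\|v\|_{X_{q,p}}$, where $B$ is the Duhamel term. We decompose $u\otimes v$ by Bony's paraproduct and split each factor into low and high parts, giving four interactions: low-low, low-high, high-low and high-high. We estimate the output separately in the low norm, using the dispersive estimate of Step 2 with input in the Lebesgue space dual to $q$ (reached by H\"older from $L^q\times L^q$ or $L^q\times L^p$ and Bernstein), and in the high norm, using heat smoothing.

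The conditions $q\le p\le2q$, $\frac{9}{4q}-\frac{3}{4p}<1<\frac2q+\frac1p$ should be precisely what makes the Bernstein exponents and the summability of the paraproduct sums (positivity of the regularity index in the high-high term, e.g.\ $\frac2q+\frac1p>1$) work. The main obstacle is this step, specifically the high-high $\to$ low interaction into the low-frequency space: the output index $-7+15/q$ is very negative, and the remainder sum must converge. There I would first try placing the product in $L^{p/2}$ or $L^{\frac{qp}{q+p}}$ and applying Bernstein up to the dual exponent.

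\textbf{Step 4: conclusion.} By the contraction mapping in a small ball of $X_{q,p}$ we obtain a global solution for small data. For large data, we take $T$ small. The high-frequency part of the linear solution has small $\tilde L^1_T(\dot B^{1+3/p}_{p,\infty})$ norm, by splitting $u_0=$ (frequency-truncated part) $+$ (small-norm remainder) as in Chemin's argument. The low part of the linear solution is small in $\tilde L^1_T$ simply because $T$ is small. Running the same bilinear estimates with these $T$-dependent small quantities gives local existence, and uniqueness follows from the contraction.
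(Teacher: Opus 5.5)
\textbf{Verdict: the proposal has a genuine gap in Step 2.} Your overall architecture matches the paper's: rescaling to $\Omega=1$, heat-type bounds at high frequency, Bony decomposition with a low/high split, and a contraction. The rescaling in Step 1 is correct; the paper simply tracks $\Omega$ by hand instead. The problem is the low-frequency linear estimate.

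\textbf{The low-frequency linear estimate.} Interpolating the $L^1\to L^\infty$ stationary-phase decay with $L^2$ conservation gives an $L^{q'}\to L^q$ bound. But the low-frequency parts of $X^{\Omega}_{0,q,p}$ and $X^{\Omega}_{q,p}$ are $L^q$-based. So already for $\mathcal{T}(t)u_0$ you need $\dot\Delta_j e^{\pm i\Omega tD_3/|D|}$ as an operator $L^q\to L^q$. Neither of your suggested routes supplies this:
\begin{itemize}
\item Bernstein cannot lower the exponent from $q$ to $q'$.
\item A $TT^*$ argument only yields Strichartz bounds with $L^2$ (or $L^{q'}$) input.
\item No such bound can be uniform in $t$ for $q\neq 2$. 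Combining $\|g\|_{L^2}^2\le\|g\|_{L^{q'}}\|g\|_{L^q}$ with your own decay estimate shows the operator norms must grow.
\end{itemize}

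What is missing is the fixed-time growth bound of Lemma \ref{p5}, $\|e^{it\Phi(D)}\|_{L^q\to L^q}\lesssim\langle t\rangle^{3|\frac12-\frac1q|}$. On a single dyadic block with symbol $m$ it also follows from $\|\check m\|_{L^1}\lesssim\|m\|_{L^2}^{1/4}\|\Delta_\xi m\|_{L^2}^{3/4}\lesssim\langle t\rangle^{3/2}$ together with interpolation against $L^2$. You then combine it with $\langle\Omega t\rangle^{a}e^{-ct2^{2k}}\lesssim(1+\Omega^{a}2^{-2ka})e^{-c't2^{2k}}$.

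It is this \emph{growth}, absorbed by the heat factor, that costs $3-\frac6q$ derivatives for $2^k<\Omega^{1/2}$. That loss is what produces the weights $\Omega^{\frac32-\frac3q}$ and $\Omega^{3-\frac6q}$ and the indices $-4+\frac9q$ and $-7+\frac{15}{q}$. Your remark about trading time decay for powers of $2^{-2j}$ is inconsistent with this. Your factor $(1+t)^{-(1-\frac2q)}2^{3j(1-\frac2q)}$ is a gain for $j<0$, so it cannot generate the loss built into the statement.

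\textbf{Consequence for Step 3.} The same choice breaks Step 3 exactly where the theorem improves on \cite{CMZ2013}. In the high-high$\to$low remainder both factors are controlled only in $L^p$, so $\tilde{\dot\Delta}_{k'}f^{h}\,\dot\Delta_{k'}g^{h}$ lies in $L^{p/2}$. The exponent $\frac{qp}{q+p}$ you mention is unavailable here, since high-frequency pieces are not in $L^q$. Feeding $L^{p/2}$ into an estimate with $L^{q'}$ input needs Bernstein from $L^{p/2}$ up to $L^{q'}$, i.e.\ $p\le 2q'\le 4$. That is the old restriction; for $q=\frac52$ it would even force $p\le\frac{10}{3}$, whereas the theorem allows $p<5$. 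No interpolation of $L^1\to L^\infty$ decay with $L^2$ accepts an input exponent above $2$.

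With the $L^q\to L^q$ estimate you only need Bernstein $L^{p/2}\to L^q$, i.e.\ $p\le 2q$. The hypotheses then arise as follows:
\begin{itemize}
\item Convergence of the remainder sum at output index $\frac{15}{q}-6$ is exactly $\frac2q+\frac1p>1$.
\item The paraproduct $T_fg^{\ell}$ requires $\frac9q-\frac3p<4$.
\end{itemize}

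\textbf{A secondary point on Step 4.} Splitting $u_0$ into a frequency-truncated part plus a remainder small in $\dot B^{-1+\frac3p}_{p,\infty}$ fails for general data with third index $\infty$. The tail $\sup_{2^j\ge 2^J}2^{j(-1+\frac3p)}\|\dot\Delta_j u_0\|_{L^p}$ need not tend to $0$. The paper calls local existence classical, and its Proposition \ref{linearesti}(3) relies on the same tail smallness, so this issue is not resolved there either.
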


\begin{remark}\label{rk:highosci}

 Let \( 0 \leq \theta < 1 \) and \( q = 2 + \theta \). We arrive at the following valid intervals for the exponent \( p \):
\begin{equation}\label{cns1}
\begin{cases}
p \in [2 + \theta, \, 4 + 2\theta], & 0 \leq \theta < \dfrac{1}{2}, \\[4pt]
p \in \left[2 + \theta, \, 1 + \dfrac{2}{\theta} \right), & \dfrac{1}{2}\leq\theta < 1.
\end{cases}
\end{equation}
In particular, when \( \theta = 0 \), our framework reduces to the classical setting treated in \cite{CMZ2013}. The choice \( \theta= \frac{1}{2} \) gives rise to the broader validity range \( p< 5 \). These developments represent a genuine improvement over earlier works \cite{CMZ2013} by substantially relaxing the constraints on the high-frequency exponent $p$.
\end{remark}

\begin{theorem}[Vanishing rotational limit]\label{strong convergence}
Under the assumptions of Theorem \ref{wp}, let $u_{\Omega}$ be the local solution of \eqref{rns} with initial data $u_{\Omega,0}$, and let $v$ be the
 solution of \eqref{INS} with initial data $v_0$.  
Assume that
\[
\|u_{\Omega,0}-v_0\|_{\dot B_{p,\infty}^{-1+\frac{3}{p}}} \to 0 \quad (\Omega \to 0^+),
\]
then for some $T>0$,
\[
\lim_{\Omega \to 0^+} \bigl\| u_{\Omega} - v \bigr\|_{\tilde{L}_{T}^{\infty}(\dot{B}_{p,\infty}^{-1+\frac{3}{p}})\cap
\tilde{L}_{T}^{1}(\dot{B}_{p,\infty}^{1+\frac{3}{p}})} = 0.
\]
\end{theorem}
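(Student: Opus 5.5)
We prove Theorem \ref{strong convergence} by a stability argument: we compare the equations satisfied by $u_\Omega$ and $v$ in the high-frequency-type norm
\[
\|w\|_{Y_T}:=\|w\|_{\tilde{L}_{T}^{\infty}(\dot{B}_{p,\infty}^{-1+\frac{3}{p}})\cap\tilde{L}_{T}^{1}(\dot{B}_{p,\infty}^{1+\frac{3}{p}})}
\]
and run a Gronwall/fixed-point type closing. First we fix a common existence time. By Theorem \ref{wp} and the classical Cannone--Planchon--Chemin theory, there is $T>0$, independent of $\Omega$ small, such that $v\in Y_T$ and $u_\Omega\in X^{\Omega}_{q,p}$ with uniform bounds. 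The key observation is that $\ell_\Omega$ frequencies are $2^j<\Omega^{1/2}$, so as $\Omega\to0^+$ the low-frequency part shrinks. The $Y_T$ norm of $u_\Omega^{\ell_\Omega}$ should be controlled by $\Omega$-weighted $X^\Omega_{q,p}$ quantities via Bernstein, since $q\le p$ and the frequencies are bounded by $\Omega^{1/2}$. We will check that the powers of $\Omega$ match, since the $X$ norms are critical. It is also natural to choose $T$ via smallness of $\|e^{t\Delta}v_0\|$ in $\tilde L^1_T(\dot B^{1+3/p}_{p,\infty})$-type quantities, uniformly for $u_{\Omega,0}$ close to $v_0$. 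Here there is a subtlety with $r=\infty$ Besov spaces, where time-smallness is not automatic. We will therefore interpret ``some $T$'' via the local theory of Theorem \ref{wp} applied to the data.

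Second, we write the equation for $w=u_\Omega-v$:
\[
\partial_t w-\Delta w+\Omega\PP(e_3\times u_\Omega)+\PP\nabla\cdot(w\otimes u_\Omega+v\otimes w)=0,\qquad w(0)=u_{\Omega,0}-v_0 .
\]
We treat the Coriolis term as a forcing term rather than using the Stokes--Coriolis semigroup, since we want comparison with the heat flow. The heat estimate in $\tilde L$ spaces gives
\[
\|w\|_{Y_T}\lesssim \|w_0\|_{\dot B^{-1+3/p}_{p,\infty}}+\Omega\|u_\Omega\|_{\tilde L^1_T(\dot B^{-1+3/p}_{p,\infty})}+\big(\|u_\Omega\|_{Y_T}+\|v\|_{Y_T}\big)\|w\|_{Y_T}.
\]
The bilinear term is handled by the standard paraproduct estimates for Navier--Stokes, valid for $p<6$ (here $p<5$).

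Third, we bound the Coriolis forcing. For the high frequencies $2^j\ge\Omega^{1/2}$ we have $\Omega\le 2^{2j}$, and interpolation between $\tilde L^\infty$ and $\tilde L^1$ gives
\[
\Omega\,\|u^{h}_\Omega\|_{\tilde L^1_T(\dot B^{-1+3/p})}\lesssim \min\big(\Omega T,\ \Omega^{\theta}T^{1-\theta}\big)\|u_\Omega\|_{X},
\]
which tends to $0$. The low frequencies are handled through Bernstein and the weighted $X$ norm, which produces a positive power of $\Omega$ times a power of $T$.

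Finally, we close the argument. If $\|u_\Omega\|_{Y_T}+\|v\|_{Y_T}$ is small on $[0,T]$, the term $\|w\|_{Y_T}$ on the right can be absorbed, and then $\|w\|_{Y_T}\to0$. When this quantity is not small, we split $[0,T]$ into finitely many subintervals on which the $\tilde L^1$ part is small and iterate. This splitting is the main obstacle: in $r=\infty$ Besov spaces the time integral does not have absolute continuity in $T$. We will therefore first try to restrict to data whose high part lies in the closure of smooth functions, or use the smallness already built into the local theory of Theorem \ref{wp}.
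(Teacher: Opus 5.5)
Your overall scheme matches the paper's. You write Duhamel for $w=u_\Omega-v$ with the heat semigroup and treat the Coriolis term as a forcing, bounded by $\Omega T\|u_\Omega\|_{\tilde L^\infty_T(\dot B^{-1+3/p}_{p,\infty})}$; the low frequencies of $u_\Omega$ are controlled via Bernstein by the $\Omega$-weighted $X^{\Omega}_{q,p}$ norm. You then absorb the bilinear term on a common interval supplied by the local theory.

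\textbf{The gap is in the absorption step.} You bound the nonlinear term by $(\|u_\Omega\|_{Y_T}+\|v\|_{Y_T})\|w\|_{Y_T}$. But $Y_T$ contains the $\tilde L^\infty_T(\dot B^{-1+3/p}_{p,\infty})$ norm, which for every $T$ is at least the size of the data. For the large-data local solutions in the theorem this coefficient is therefore not small. It also does not shrink when $T$ is shortened or $[0,T]$ is subdivided, so neither your absorption nor your subinterval iteration closes. Smallness of the $\tilde L^1$ part alone does not rescue an estimate in which $\|u_\Omega\|_{\tilde L^\infty_T}$ multiplies $\|w\|_{\tilde L^1_T}$ linearly. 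As written, your argument only handles small data.

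What is missing is a form of the bilinear estimate in which the factor multiplying $\|w\|_{Y_T}$ is a positive power of $\|u_\Omega\|_{\tilde L^1_T(\dot B^{1+3/p}_{p,\infty})}$ and $\|v\|_{\tilde L^1_T(\dot B^{1+3/p}_{p,\infty})}$. These are the only quantities the local theory makes small.
\begin{itemize}
\item The paper estimates through $\tilde L^2_T(\dot B^{3/p}_{p,\infty})$ and interpolates: $\|u\|_{\tilde L^2_T(\dot B^{3/p}_{p,\infty})}\le\|u\|^{1/2}_{\tilde L^1_T(\dot B^{1+3/p}_{p,\infty})}\|u\|^{1/2}_{\tilde L^\infty_T(\dot B^{-1+3/p}_{p,\infty})}$.
\item It then uses bounds $\|u_\Omega\|_{\tilde L^1_T(\dot B^{1+3/p}_{p,\infty})},\ \|v\|_{\tilde L^1_T(\dot B^{1+3/p}_{p,\infty})}\lesssim\delta$ on a common $T$, taken from the local theory. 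This is exactly the $r=\infty$ issue you flag; the paper simply assumes it.
\item This gives the coefficient $(\delta^3+\delta\|u_{0,\Omega}\|_{X^{\Omega}_{0,q,p}})^{1/2}$, hence absorption, and finally $\|w\|_{Y_T}\le 2K\|w(0)\|_{\dot B^{-1+3/p}_{p,\infty}}+2K\Omega T(\delta^2+\|u_{0,\Omega}\|_{X^{\Omega}_{0,q,p}})\to0$.
\end{itemize}

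At $r=\infty$ the symmetric $\tilde L^2\times\tilde L^2$ pairing is itself delicate. A $\sup_j$ norm at positive regularity $3/p$ does not control $\dot S_{k-1}u$ in $L^\infty$, because the bound $\sum_{j\le k-2}2^{3j/p}\|\dot\Delta_j u\|_{L^p}$ diverges over low $j$. A safe version pairs $\tilde L^4_T(\dot B^{-1/2+3/p}_{p,\infty})$ for the low-frequency factor with $\tilde L^{4/3}_T(\dot B^{1/2+3/p}_{p,\infty})$ for the other, in both paraproducts and the remainder; the remainder uses $p\ge 2$. This yields the coefficient $\|u\|^{3/4}_{\tilde L^\infty_T(\dot B^{-1+3/p}_{p,\infty})}\|u\|^{1/4}_{\tilde L^1_T(\dot B^{1+3/p}_{p,\infty})}+\|u\|^{1/4}_{\tilde L^\infty_T(\dot B^{-1+3/p}_{p,\infty})}\|u\|^{3/4}_{\tilde L^1_T(\dot B^{1+3/p}_{p,\infty})}$. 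This is still small once the $\tilde L^1_T$ norms are $\lesssim\delta$.
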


\subsection{Sketch of the proof}
We outline the key steps involved in main theorems. The main ingredient of our new $L^q-L^p$ framework is to understand the behavior of the following linearized Stokes-Corioli system:
\begin{equation}
\begin{cases}
\partial_t u_{\Omega} - \Delta u_{\Omega}  + \Omega e_3 \times u_{\Omega}  + \nabla  \pi_{\Omega}  = 0, \\
\nabla \cdot u_{\Omega}  = 0, \\
u_{\Omega} (0,x) = f(x) ,
\end{cases}
\end{equation}
whose associated semi-group is given by $e^{tA_\Omega}$.
The main constraint of the general framework arises because of the dispersive part of the Stokes-Coriolis semi group. This enormously influences the dissipation in the Stokes operator. Indeed, 
in the ``high-frequency" region $|\xi|\geq\Omega^{\frac 12}$, the semigroup $e^{tA_\Omega}$ behaves predominantly as the heat semigroup $e^{ct\Delta}$. However, in the ``low-frequency" region $|\xi|<\Omega^{\frac 12}$, dispersive  operator $e^{\pm i\Omega t \frac{D_{3}}{|D|}}$ is prominent, which prevented earlier approaches in \cite{ HieberShibata2010,5,6, 15,20} to work in general $L^p$ spaces. In \cite{CMZ2013}, $L^2$-based estimates for low frequency was used to deal with the dispersive part, but the consequence is the restriction of the range for $p\leq 4$ from the high-high to low estimates.

In this paper, we follow the ideas in our previous work regarding the compressible Navier-Stokes equation \cite{GSY}. We use $L^q$-estimates for the low frequency, but with a different dispersion relation. Using the fixed time estimate obtained in \cite{BV2023}, we derive a new estimate for Stokes-Coriolis semigroup that incorporates dispersive effects within different frequency regions:
\begin{equation}\label{1.23}
\|\dot{\Delta}_{j} e^{tA_\Omega} f\|_{L^p}\lesssim\left\{
\begin{aligned}
&e^{-c t 2^{2j}}\|\dot{\Delta}_{j}f\|_{L^p},\quad\quad\quad\quad \quad\quad\quad\quad\quad2^j\geq\Omega^{\frac 12},\\ 
&e^{-c t 2^{2j}}\Omega^{\,3|\frac12 - \frac1p|}
        2^{-6k|\frac12 - \frac1p|}\|\dot{\Delta}_{j} f\|_{L^p},\quad\,\,\, 2^j<\Omega^{\frac 12}.
\end{aligned}
\right.
\end{equation}
Above inequalities indicate us to introduce the hybrid Besov space, with frequencies cut-off depending on parameter $\Omega$, and extend the Lebesgue framework to the more general one to the previously unattainable range \( 2<q<\frac p2\) by imposing the supercritical regularity in the low frequencies.
The remaining task is to establish the bilinear estimates in the new $L^q-L^p$ hybrid space. The arguments are similar as the estimates in our previous work \cite{GSY}. However, we tracked carefully the dependence on the parameter $\Omega$, see Proposition \ref{besti}.

The rest of this paper unfolds as follows. In Section 2, we
provide a detailed analysis to derive new $L^q-L^p$ bounds for the Stokes-Coriolis system,
which is crucial to this paper. In Section 3, we prove the well-posedness part of Theorem \ref{wp}. Section 4 is devoted to prove the limiting behaviour. In Appendix, we recall the classical Littlewood-Paley theory.

\section{Estimates of the Stokes-Coriolis system}
In this section, we establish the  estimate in terms of the linearized system. We consider the linear Stokes-Coriolis system
\begin{equation}\label{linear}
\begin{cases}
\partial_t u_{\Omega} - \Delta u_{\Omega}  + \Omega e_3 \times u_{\Omega}  + \nabla  \pi_{\Omega}  = g, \\
\nabla \cdot u_{\Omega}  = 0, \\
u_{\Omega} (0,x) = f(x) ,
\end{cases}
\end{equation}
and $e^{tA_\Omega}$ to be the corresponding Stokes-Coriolis semigroup. The main goal of this subsection is to prove the following proposition:
\begin{proposition} \label{linearesti}
  Let $s \in \mathbb{R}$ and $p,r,\rho,\rho_1 \in [1, \infty]$ and $\rho\leq\rho_1$. Suppose $f,g$ are distributions and we define
  $$U\triangleq e^{tA_\Omega}f;\quad V\triangleq\int_0^t e^{(t-\tau)A_\Omega} g(\tau)  d\tau,$$
then

(1). There exists a constant $C > 0$, independent of  $\Omega$, such that for all $t \geq 0$,
\begin{equation}\label{high-estimate1}
\|U\|_{\tilde{L}_{t}^{\rho_{1}}(\dot{B}_{p,r}^{s + \frac{2}{\rho_{1}}})}^{h_{\Omega}}
\leq C\|f\|_{\dot{B}_{p,r}^{s}}^{h_{\Omega}};
\end{equation}
\begin{equation}\label{high-estimate2}
\|V\|_{\tilde{L}_{t}^{\rho_{1}}(\dot{B}_{p,r}^{s + \frac{2}{\rho_{1}}})}^{h_{\Omega}}
\leq C\|g\|_{\tilde{L}_{t}^{\rho}(\dot{B}_{p,r}^{s - 2 + \frac{2}{\rho}})}^{h_{\Omega}}.
\end{equation}

(2). There exists a constant $C > 0$, independent of $\Omega$, such that for all $t \geq 0$,
\begin{equation}\label{low-estimate1}
\begin{aligned}
\|U\|_{\tilde{L}_t^{\rho_1}(\dot{B}_{p,r}^{s+ \frac{2}{\rho_1}})}^{\ell_{\Omega}}
\leq C\bigl(\|f\|_{\dot{B}_{p,r}^{s}}^{\ell_{\Omega}} +
\Omega^{3|\frac{1}{2} - \frac{1}{p})|}\|f\|_{\dot{B}_{p,r}^{s-3+\frac{6}{p}}}^{\ell_{\Omega}}\bigr);
\end{aligned}
\end{equation}
\begin{equation}\label{low-estimate2}
\begin{aligned}
\|V\|_{\tilde{L}_t^{\rho_1}(\dot{B}_{p,r}^{s+ \frac{2}{\rho_1}})}^{\ell_{\Omega}}
\leq C\bigl(
\|g\|_{\tilde{L}_t^\rho(\dot{B}_{,r}^{s - 2 + \frac{2}{\rho}})}^{\ell_{\Omega}}+\Omega^{3|\frac{1}{2} - \frac{1}{p}|}\|g\|_{\tilde{L}_t^\rho(\dot{B}_{p,r}^{s-5+\frac{6}{p}+ \frac{2}{\rho}})}^{\ell_{\Omega}}\bigr).
\end{aligned}
\end{equation}

(3). Assume that $\rho_1 \in [1, \infty)$. If $\|f\|_{\dot{B}_{p,r}^{s}}^{h_{\Omega}}<\infty$, then \begin{equation}\label{Tlim}
\begin{aligned}
\lim_{T \to  0^{+}} \| U \|_{\tilde L_T^{\rho_1}(\dot B_{p,r}^{s+\frac{2}{\rho_1}})}^{h_{\Omega}} = 0;
\end{aligned}
\end{equation}
and if $$\|f\|_{\dot{B}_{p,r}^{s}}^{\ell_{\Omega}} +
\Omega^{3|\frac{1}{2} - \frac{1}{p}|}\|f\|_{\dot{B}_{p,r}^{s-3+\frac{6}{p}}}^{\ell_{\Omega}}<\infty,$$ then
\begin{equation}
\begin{aligned}
\lim_{T \to  0^{+}} \| U \|_{\tilde L_T^{\rho_1}(\dot B_{p,r}^{s+\frac{2}{\rho_1}})}^{^{\ell_{\Omega}}} = 0.
\end{aligned}
\end{equation}
\end{proposition}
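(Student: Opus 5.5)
The plan is to derive everything from a single frequency-localized pointwise-in-time bound for the Stokes--Coriolis semigroup. After that, the Besov and Chemin--Lerner estimates follow from Young's inequality in time.

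\textbf{Step 1: the explicit symbol.} After applying the Leray projection, the symbol of $e^{tA_\Omega}$ on divergence-free fields is
\[
\tfrac12\bigl(e^{t\Delta+i\Omega t\frac{D_3}{|D|}}+e^{t\Delta-i\Omega t\frac{D_3}{|D|}}\bigr)I+\tfrac{1}{2i}\bigl(e^{t\Delta+i\Omega t\frac{D_3}{|D|}}-e^{t\Delta-i\Omega t\frac{D_3}{|D|}}\bigr)\mathcal R,
\]
where $\mathcal R$ is a matrix of Riesz-type zeroth-order multipliers. These are bounded on each $\dot\Delta_j$ block uniformly in $j$.

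\textbf{Step 2: the localized bound \eqref{1.23}.} The point is that for each $j$ and $t\ge0$,
\[
\|\dot\Delta_j e^{tA_\Omega}f\|_{L^p}\lesssim e^{-ct2^{2j}}\,m_j\,\|\dot\Delta_j f\|_{L^p},
\]
where $m_j=1$ if $2^j\ge\Omega^{1/2}$ and $m_j=1+\Omega^{3|\frac12-\frac1p|}2^{-6j|\frac12-\frac1p|}$ if $2^j<\Omega^{1/2}$. We obtain it as follows.

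The heat factor is handled by writing $e^{t\Delta}\dot\Delta_j=e^{\frac t2\Delta}\tilde\varphi(2^{-j}D)\,e^{\frac t2\Delta}\dot\Delta_j$. The kernel of $e^{\frac t2\Delta}\tilde\varphi(2^{-j}D)$ has $L^1$ norm $\lesssim e^{-ct2^{2j}}$. This is the standard localized heat estimate, and it follows from scaling together with integration by parts.

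For the dispersive factor, $L^2$ boundedness is trivial. For $L^1\to L^1$ (equivalently $L^\infty$) we need the $L^1$ norm of the kernel of $e^{\pm i\Omega t\xi_3/|\xi|}\varphi(2^{-j}\xi)$. By scaling $\xi\to2^j\xi$ this equals $\|\mathcal F^{-1}(e^{\pm is\xi_3/|\xi|}\varphi)\|_{L^1}$ with $s=\Omega t$. The fixed-time estimate of \cite{BV2023} gives the bound $\lesssim(1+s)^{3/2}$; I would quote it, since the phase is homogeneous of degree $0$ and the kernel spreads in a $3$-dimensional region of size $s$.

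Riesz--Thorin interpolation then gives the factor $(1+\Omega t)^{3|\frac12-\frac1p|}$. We absorb it by half of the heat decay:
\[
(1+\Omega t)^{\alpha}e^{-ct2^{2j}}\lesssim\bigl(1+\Omega2^{-2j}\bigr)^{\alpha}e^{-\frac c2t2^{2j}},
\]
using $\sup_t (\Omega t)^\alpha e^{-\frac c2 t2^{2j}}\lesssim(\Omega2^{-2j})^\alpha$. With $\alpha=3|\frac12-\frac1p|$ this is exactly $\Omega^{\alpha}2^{-6j|\frac12-\frac1p|}$. In the high-frequency regime $\Omega2^{-2j}\le1$ the factor is $O(1)$. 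This absorption, together with quoting the correct fixed-time bound (including the constant uniformity in $\Omega$), is the main obstacle.

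\textbf{Step 3: the Chemin--Lerner norms.}

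For $U$, take the $L^{\rho_1}_t$ norm of $e^{-ct2^{2j}}$, which is $\lesssim2^{-2j/\rho_1}$. Multiply by $2^{j(s+2/\rho_1)}$, take the $\ell^r$ norm over $j$ in the respective frequency range, and we get \eqref{high-estimate1} and \eqref{low-estimate1}.

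For $V$, apply Step 2 inside the Duhamel integral. Young's inequality in time, with $1+\frac1{\rho_1}=\frac1\rho+\frac1\kappa$ (possible since $\rho\le\rho_1$), gives
\[
\|\dot\Delta_jV\|_{L^{\rho_1}_t L^p}\lesssim m_j2^{-2j(1+\frac1{\rho_1}-\frac1\rho)}\|\dot\Delta_jg\|_{L^\rho_tL^p}.
\]
This yields \eqref{high-estimate2} and \eqref{low-estimate2}, with the extra weight $2^{-6j|\frac12-\frac1p|}$. The resulting regularity indices match the statement when $p\ge2$, where $-6|\frac12-\frac1p|=-3+\frac6p$. For $p<2$ we would interpret the indices with the absolute value.

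\textbf{Step 4: part (3).} Since $\rho_1<\infty$, for each fixed $j$ we have $\|e^{-ct2^{2j}}\|_{L^{\rho_1}(0,T)}2^{2j/\rho_1}\to0$ as $T\to0$, and this quantity is bounded by a constant uniformly in $j$. For $r<\infty$, dominated convergence in $\ell^r$ gives the limit. For $r=\infty$, one needs the data to lie in the closure of Schwartz functions (tails small), or else one splits into finitely many $j$ plus a uniformly small tail. I would state the argument under that standard convention, applying it separately to the high- and low-frequency pieces using the corresponding bounds from Step 2.
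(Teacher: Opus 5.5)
Your proposal is correct and follows essentially the paper's route. Both use a dyadic fixed-time bound $e^{-ct2^{2j}}\langle \Omega t\rangle^{3|\frac12-\frac1p|}$, absorb the growth into half of the heat decay, apply Young's inequality in time, and split part (3) into finitely many $j$ plus a tail. The paper invokes the $L^p$ bound of Lemma~\ref{p5} only for $1<p<\infty$. Your $L^1$--$L^2$ interpolation on each dyadic block also covers $p=1,\infty$, and the localized kernel bound $(1+s)^{3/2}$ it needs is elementary, so you need not attribute it to that lemma.

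Your caveats are genuine. The index $s-3+\frac6p$ is right only for $p\ge 2$. For $r=\infty$, the high-frequency half of (3) needs vanishing tails, which the paper's choice of $J_2$ tacitly assumes. The low-frequency half needs no such hypothesis, since there $T2^{2j}<T\Omega$ makes the convergence uniform in $j$.
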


The rest of this section would be devoted to proving above Proposition. We shall give a careful analysis on the corresponding Green matrix and give the estimates by delicate frequency decomposition.

\subsection{Analysis of Green matrix}
Let \( P(\xi) \) denote the Fourier multiplier matrix associated with the Helmholtz projection \(\mathbb{P}\), defined by
\[
\widehat{\mathbb{P} f}(\xi) = P(\xi) \hat{f}(\xi), \quad \text{where} \quad P(\xi) := \left( \delta_{jk} - \frac{\xi_j \xi_k}{|\xi|^2} \right)_{1 \leq j,k \leq 3} 
\]
for \(\xi \in \mathbb{R}^3 \setminus \{0\}\). By \eqref{linear}, the Stokes--Coriolis semigroup admits the explicit representation
\[
\begin{aligned}
e^{tA_\Omega}f &= \mathcal{F}^{-1} \left[ e^{-t|\xi|^2} \left\{ \cos \left( \Omega \frac{\xi_3}{|\xi|} t \right) I + \sin \left( \Omega \frac{\xi_3}{|\xi|} t \right) R(\xi) \right\} \hat{f}(\xi) \right]
\end{aligned}
\]
for divergence-free vector fields \( f \in L^2 (\mathbb{R}^3)^3 \). Here, \( I \) denotes the \( 3 \times 3 \) identity matrix, and \( R(\xi) \) is the skew-symmetric matrix associated with Riesz transforms, given by
\[
R(\xi) := \frac{1}{|\xi|} \begin{pmatrix} 0 & \xi_3 & -\xi_2 \\ -\xi_3 & 0 & \xi_1 \\ \xi_2 & -\xi_1 & 0 \end{pmatrix}, \quad \xi \in \mathbb{R}^3 \setminus \{0\}.
\]

\begin{lemma}[\cite{BV2023}]\label{p5}
For $\Phi \in C^{\infty}(\mathbb{R}^3 \setminus \{0\}, \mathbb{R})$,
 a homogeneous function of degree 0, we consider a family of Fourier multiplier operators indexed by $t \in \mathbb{R}$, defined on the set of Schwartz functions $\mathcal{S}(\mathbb{R}^3)$ with
\[
T_{\Phi}^t f(x) := \int_{\mathbb{R}^3}
e^{ i (x \cdot \xi + t \Phi(\xi))} \widehat{f}(\xi) \, d\xi. 
\]
 There exists a finite constant $C_{d,\Phi}$ such that for every exponent $p \in (1, \infty)$ and $t \in \mathbb{R}$ we have
\[
\left\| T_{\Phi}^{t} \right\|_{L^{p} \to L^{p}} \leq C_{\Phi} (p^{*} - 1) \langle t \rangle^{3| \frac{1}{2} - \frac{1}{p} |}.
\]
where $p^{*} := \max\left\{p, \frac{p}{p-1}\right\}$ and $\langle t\rangle=\sqrt{1+t^{2}}$.
\end{lemma}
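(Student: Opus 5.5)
The statement immediately above is Lemma \ref{p5}, quoted from \cite{BV2023}. I would prove it by interpolating between a trivial $L^2$ bound and an $L^1$ (equivalently $L^\infty$) bound that grows like $\langle t\rangle^{3/2}$. Riesz--Thorin with $\theta$ determined by $\frac1p=\frac{1-\theta}{2}+\theta$ (for $p<2$) then yields exactly $\langle t\rangle^{3|\frac12-\frac1p|}$, and duality handles $p>2$. Since $\Phi$ is real, $e^{it\Phi}$ is unimodular and $\|T^t_\Phi\|_{L^2\to L^2}=1$ by Plancherel. So everything reduces to an endpoint estimate. The endpoint cannot hold literally on $L^1$, because a degree-zero symbol is not bounded there in general. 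Hence the $(p^*-1)$ factor, which signals that the true endpoint is a weak-type or Hardy-space bound: $H^1\to L^1$ with norm $\lesssim\langle t\rangle^{3/2}$, or $L^\infty\to BMO$.

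For the endpoint, I would split $e^{it\Phi(\xi)}=\sum_j e^{it\Phi(\xi)}\varphi(2^{-j}\xi)$. By degree-zero homogeneity, the piece at scale $2^j$ is the rescaling of the $j=0$ piece, so it suffices to control the kernel $K_t=\mathcal F^{-1}[e^{it\Phi}\varphi]$ in the Hörmander sense, uniformly across scales. On the annulus, $\partial^\alpha_\xi e^{it\Phi}=O(\langle t\rangle^{|\alpha|})$. Hence $K_t$ is essentially concentrated in $|x|\lesssim\langle t\rangle$, with rapid decay beyond that: integrate by parts in $\xi$, gaining $\langle t\rangle/|x|$ per step. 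The Mikhlin--Hörmander condition with constants scaled by $\langle t\rangle$ then gives
\begin{equation*}
\|T^t_\Phi\|_{L^p\to L^p}\lesssim (p^*-1)\,\langle t\rangle^{3/2\cdot|\ldots|}.
\end{equation*}
I would obtain this via the $L^2$-based Hörmander condition $\sup_j\|\varphi(\cdot)\,m(2^j\cdot)\|_{H^{s}}$ with $s=\frac32+\epsilon$; that Sobolev norm is $\lesssim\langle t\rangle^{3/2+\epsilon}$.

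The main obstacle is removing that $\epsilon$ loss. I would do this by carrying out the Calderón--Zygmund decomposition at the correct scale. For an $H^1$ atom supported in a ball of radius $r$, I would treat $r\lesssim\langle t\rangle 2^{-j}$ using Cauchy--Schwarz on the set $|x|\lesssim\langle t\rangle2^{-j}$. This costs $(\langle t\rangle 2^{-j})^{3/2}$ against the $L^2$ norm, and the localization in $j$ compensates. For $r\gtrsim\langle t\rangle2^{-j}$, I would use the cancellation of the atom together with the smoothness of the kernel at that scale. Summing over $j$ gives the sharp $\langle t\rangle^{3/2}$ $H^1\to L^1$ bound; interpolation with the $L^2$ identity (Fefferman--Stein) then produces the lemma, and the constant $(p^*-1)$ comes from the standard interpolation constant near the endpoints.
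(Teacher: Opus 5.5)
The paper gives no proof of this lemma; it is quoted from \cite{BV2023}, so your sketch has to stand on its own. Your overall architecture is sound: Plancherel at $p=2$, a sharp bound $\|T^t_\Phi\|_{H^1\to L^1}\lesssim\langle t\rangle^{3/2}$, duality, and Fefferman--Stein interpolation give the statement, since $\frac32(\frac2p-1)=3(\frac1p-\frac12)$. On the dual side $L^\infty\to BMO$, the inequality $\|g\|_{L^p}\lesssim p\,\|M^{\#}g\|_{L^p}$ is what produces the factor $p^*-1$.

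The gap is in the endpoint estimate, where you assign the tools to the wrong regimes. Write $K_j(x)=2^{3j}K_t(2^jx)$ and let $a$ be an atom on $B(x_0,r)$.
\begin{itemize}
\item For $2^jr\gtrsim\langle t\rangle$, the kernel $K_j$ oscillates at scale $2^{-j}\ll r$, so cancellation of $a$ buys nothing. The mean-value bound gives $\|K_j*a\|_{L^1}\le r\|\nabla K_j\|_{L^1}\lesssim 2^jr\,\langle t\rangle^{3/2}$, which grows with $j$. Even the trivial bound $\langle t\rangle^{3/2}$ per scale is not summable over infinitely many $j$.
\item For $2^jr\ll1$, your Cauchy--Schwarz bound $(\langle t\rangle2^{-j})^{3/2}\|\dot\Delta_ja\|_{L^2}$ is summable as $j\to-\infty$ only if you use the moment condition, $\|\dot\Delta_ja\|_{L^2}\lesssim 2^jr\,2^{3j/2}$. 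Frequency localization alone gives about $\langle t\rangle^{3/2}$ for every such $j$.
\item The pointwise decay $|K_t(x)|\lesssim(\langle t\rangle/|x|)^N$ you derive is too weak for the tails. It only yields $\int_{|y|>\lambda\langle t\rangle}|K_t|\lesssim\langle t\rangle^{3}\lambda^{3-N}$, a loss of $\langle t\rangle^{3}$ instead of $\langle t\rangle^{3/2}$.
\end{itemize}

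The standard repair splits at $2^jr\sim1$ rather than at $2^jr\sim\langle t\rangle$, and uses a single exceptional ball $B^*=B(x_0,2\langle t\rangle r)$ for all $j$.
\begin{itemize}
\item On $B^*$, Cauchy--Schwarz and Plancherel give $\|T^t_\Phi a\|_{L^1(B^*)}\lesssim(\langle t\rangle r)^{3/2}\|a\|_{L^2}\lesssim\langle t\rangle^{3/2}$.
\item Off $B^*$ with $2^jr\ge1$: for $x\notin B^*$ and $z\in B(x_0,r)$ one has $|x-z|\ge\langle t\rangle r$. Use the weighted-$L^2$ tail bound $\int_{|y|>\lambda\langle t\rangle}|K_t|\le\||y|^NK_t\|_{L^2}\,\||y|^{-N}\|_{L^2(|y|>\lambda\langle t\rangle)}\lesssim\langle t\rangle^{3/2}\lambda^{3/2-N}$ with $\lambda=2^jr$. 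It holds because $\|\partial_\xi^\alpha(e^{it\Phi}\varphi)\|_{L^2}\lesssim\langle t\rangle^{|\alpha|}$.
\item Off $B^*$ with $2^jr<1$: use cancellation, $\|K_j*a\|_{L^1}\le r2^j\|\nabla K_t\|_{L^1}\lesssim 2^jr\,\langle t\rangle^{3/2}$. Here $\|\nabla K_t\|_{L^1}\lesssim\langle t\rangle^{3/2}$ by the same Cauchy--Schwarz/weighted-$L^2$ argument.
\end{itemize}
Both sums over $j$ are geometric. This gives $\|T^t_\Phi a\|_{L^1}\lesssim\langle t\rangle^{3/2}$ uniformly over atoms, with no $\epsilon$ or logarithmic loss, and your interpolation step then finishes the proof.
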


\begin{lemma}\label{stokeslow}
    Assume $1 < p < \infty$. Then
    \begin{equation}\label{lowfresemi}
        \| e^{tA_\Omega} \dot{\Delta}_{k} f \|_{L^{p}}
        \lesssim  (p^{*} - 1) \, e^{-c t 2^{2k}}
        \Bigl(1 + \Omega^{\,3|\frac12 - \frac1p|}
        2^{-6k|\frac12 - \frac1p|} \Bigr)
        \| \dot{\Delta}_k f \|_{L^p}.
    \end{equation}
\end{lemma}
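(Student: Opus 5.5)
We prove Lemma \ref{stokeslow} by splitting the Green matrix into a heat part and a dispersive part. Write
\[
\cos\Bigl(\Omega t\tfrac{\xi_3}{|\xi|}\Bigr)=\tfrac12\bigl(e^{i\Omega t\frac{\xi_3}{|\xi|}}+e^{-i\Omega t\frac{\xi_3}{|\xi|}}\bigr),\qquad
\sin\Bigl(\Omega t\tfrac{\xi_3}{|\xi|}\Bigr)=\tfrac1{2i}\bigl(e^{i\Omega t\frac{\xi_3}{|\xi|}}-e^{-i\Omega t\frac{\xi_3}{|\xi|}}\bigr).
\]
Then $e^{tA_\Omega}\dot\Delta_k f$ is a finite combination of terms of the form
\[
T^{\pm\Omega t}_{\Phi}\, M\, e^{t\Delta}\dot\Delta_k f,
\]
where $\Phi(\xi)=\xi_3/|\xi|$ is smooth and homogeneous of degree $0$ away from the origin, and $M$ is either the identity or one of the Riesz-type multipliers $\xi_l/|\xi|$ that make up $R(\xi)$. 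All these multipliers commute, so we may order the operators as we like.

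\textbf{Step 1 (heat and Riesz factors).} Choose $\tilde\varphi$ equal to $1$ on the support of $\varphi$, and write $\dot\Delta_k f=\tilde\varphi(2^{-k}D)\dot\Delta_k f$. The standard localized heat estimate gives
\[
\|e^{t\Delta}\tilde\varphi(2^{-k}D)g\|_{L^p}\lesssim e^{-ct2^{2k}}\|g\|_{L^p}.
\]
Its proof is scaling to $k=0$ plus integration by parts in the kernel, with constants uniform in $p$. The factor $M\tilde\varphi(2^{-k}D)$ has a scaled Schwartz kernel, so it is bounded on $L^p$ uniformly in $k$ and $p$. (One could also invoke Mikhlin for it, at the cost of a factor $p^*-1$.)

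\textbf{Step 2 (dispersive factor).} Apply Lemma \ref{p5} with time parameter $\pm\Omega t$:
\[
\|T^{\pm\Omega t}_\Phi\|_{L^p\to L^p}\lesssim (p^*-1)\langle \Omega t\rangle^{3|\frac12-\frac1p|}.
\]
Combining with Step 1 gives
\[
\|e^{tA_\Omega}\dot\Delta_k f\|_{L^p}\lesssim (p^*-1)\,e^{-ct2^{2k}}\bigl(1+(\Omega t)^{3|\frac12-\frac1p|}\bigr)\|\dot\Delta_k f\|_{L^p}.
\]

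\textbf{Step 3 (trading time growth for frequency).} Set $\alpha=3|\frac12-\frac1p|$. Since
\[
(t2^{2k})^{\alpha}e^{-ct2^{2k}}\le C_\alpha e^{-\frac c2 t2^{2k}},
\]
we get
\[
(\Omega t)^{\alpha}e^{-ct2^{2k}}=\Omega^{\alpha}2^{-2k\alpha}(t2^{2k})^{\alpha}e^{-ct2^{2k}}\lesssim \Omega^{\alpha}2^{-6k|\frac12-\frac1p|}e^{-\frac c2 t2^{2k}}.
\]
Renaming $c$ then yields \eqref{lowfresemi}. The constant $C_\alpha$ is uniform because $\alpha\le 3/2$.

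The main obstacle is applying Lemma \ref{p5}. It is stated for Schwartz functions and the bare phase multiplier, so we must check that composing it with the Littlewood–Paley and heat cutoffs loses nothing. Because the multipliers commute, we first apply the bounded localized heat and Riesz operators to $\dot\Delta_k f$ and then let $T_\Phi^{\pm\Omega t}$ act last. The extension from $\mathcal S$ to $L^p$ is by density, since $1<p<\infty$. We should also check that the dependence on $t$ enters only through $\langle\Omega t\rangle$, and that $\langle\Omega t\rangle^{\alpha}\lesssim 1+(\Omega t)^{\alpha}$.
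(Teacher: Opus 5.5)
Your proof is correct and follows the same route as the paper: Lemma~\ref{p5} gives the factor $(p^*-1)\langle \Omega t\rangle^{3|\frac12-\frac1p|}$, the localized heat multiplier gives $e^{-ct2^{2k}}$, and the bound $(t2^{2k})^{\alpha}e^{-ct2^{2k}}\lesssim e^{-\frac c2 t2^{2k}}$ turns $(\Omega t)^{\alpha}$ into $\Omega^{\alpha}2^{-6k|\frac12-\frac1p|}$. Your splitting of $\cos$ and $\sin$ into $e^{\pm i\Omega t\xi_3/|\xi|}$, with uniformly bounded localized Riesz and heat factors, makes rigorous the step the paper compresses into the shorthand symbol $e^{it\Omega R(\xi)}$.
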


\begin{proof}
The operator $e^{tA_\Omega}\dot{\Delta}_k$ is a Fourier multiplier with symbol
    \[
        M_{t,k}(\xi) = e^{-t|\xi|^2} e^{it\Omega \cdot R(\xi)} \psi_k(\xi),
    \]
where $\psi_k$ corresponds to the frequency localization $\dot{\Delta}_k$, which restricts to $|\xi| \sim 2^k$. By Lemma~\ref{p5},
    \[
        \| e^{tA_\Omega} \dot{\Delta}_{k} f \|_{L^{p}}
        \lesssim (p^{*} - 1) \, e^{-c t 2^{2k}}
        \langle \Omega t \rangle^{3|\frac12 - \frac1p|}
        \| \dot{\Delta}_{k} f \|_{L^{p}}.
    \]
Using the bound
    \[
        \langle \Omega t \rangle \lesssim 1 + |\Omega| 2^{-2k} \cdot 2^{2k} t
    \]
and noting that the exponential decay dominates the polynomial growth, we obtain the simplified estimate
    \[
        \| e^{tA_\Omega} \dot{\Delta}_k f \|_{L^p}
        \lesssim (p^{*} - 1) \, e^{-c t 2^{2k}}
        \Bigl(1 + \Omega^{\,3|\frac12 - \frac1p|}
        2^{-6k|\frac12 - \frac1p|} \Bigr)
        \| \dot{\Delta}_k f \|_{L^p}.
    \]
This completes the proof.
\end{proof}

\subsection{Proof of Proposition  ~\ref{linearesti}}
In the following, we begin to prove  Proposition  ~\ref{linearesti}.
By Lemma~\ref{stokeslow}, we have
\begin{equation}\label{semi1}
\begin{aligned}
\|\dot{\Delta}_k U\|_{L^q}
&\lesssim e^{-ct 2^{2k}} \bigl(\Omega^{3|\frac{1}{2} - \frac{1}{q}|}2^{-6k|\frac{1}{2}-\frac{1}{q}|} + 1\bigr) \|\dot{\Delta}_k f\|_{L^q} \
\end{aligned}
\end{equation}
and
\begin{equation}\label{semi2}
\begin{aligned}
\|\dot{\Delta}_k V\|_{L^q}
&\lesssim \int_0^t e^{-c(t-s) 2^{2k}} \bigl(1 + \Omega^{3|\frac{1}{2} - \frac{1}{q}|}2^{-6k|\frac{1}{2}-\frac{1}{q}|}\bigr) \|\dot{\Delta}_k g(s)\|_{L^q}  ds.
\end{aligned}
\end{equation}
Taking the $L^{\rho_1}$-norm in time in \eqref{semi1} and \eqref{semi2} gives
\begin{equation}\label{semi3}
\begin{aligned}
\|\dot{\Delta}_k U\|_{L_t^{\rho_1}(L^q)} \lesssim \biggl(\frac{1 - e^{-c t \rho_1 2^{2k}}}{c \rho_1 2^{2k}} \biggr)^{\!\!\frac{1}{\rho_1}}
\bigl( \|\dot{\Delta}_kf\|_{L^q} + \Omega^{3|\frac{1}{2}-\frac{1}{q})|}2^{-6k|\frac{1}{2}-\frac{1}{q}|}\|\dot{\Delta}_k f\|_{L^q} \bigr) \\
\end{aligned}
\end{equation}
and
\begin{equation}\label{semi4}
\begin{aligned}
\|\dot{\Delta}_k V\|_{L_t^{\rho_1}(L^q)} \lesssim  \biggl( \frac{1 - e^{-c t \rho_2 2^{2k}}}{c \rho_2 2^{2k}} \biggr)^{\!\!\frac{1}{\rho_2}}
\bigl( \|\dot{\Delta}_k g\|_{L_t^\rho(L^q)} + \Omega^{3|\frac{1}{2} - \frac{1}{q}|}2^{-6k|\frac{1}{2}-\frac{1}{q}|}\|\dot{\Delta}_k g\|_{L_t^\rho(L^q)} \bigr),
\end{aligned}
\end{equation}
where $\frac{1}{\rho_2} = 1 + \frac{1}{\rho_1} - \frac{1}{\rho}$. Let $k_0 = \max\{k : 2^{2k}<\Omega\}$.

\underline{(1) High-frequency estimate.}  When $k>k_{0}$, we have $\Omega^{3|\frac{1}{2} - \frac{1}{q}|}2^{-6k|\frac{1}{2}-\frac{1}{q}|}\lesssim 1$. Multiplying  \eqref{semi3} and
\eqref{semi4} by $2^{k(s + 2/\rho_1)}$ and taking the $\ell^r$-norm for $k>k_{0}$, we get \eqref{high-estimate1} and
\eqref{high-estimate2}.

\underline{(2) Low-frequency estimate.}  
  Multiplying  \eqref{semi3} and
\eqref{semi4} by $2^{k(s + 2/\rho_1)}$ and taking the $\ell^r$-norm for $k\leq k_{0}$, we obtain \eqref{low-estimate1} and \eqref{low-estimate2}.

\underline{(3) Limit behavior as $T \to  0^{+}$.} 
When $\rho_1 \in [1, \infty)$. We split the frequency axis into low and high frequencies, and choose appropriate frequency thresholds and a small time interval. Applying 
\eqref{semi1}, there exists $J_2$ (independent of $T$) large enough such that
\begin{equation}\label{high0}
\begin{aligned}
\sum_{j\ge J_2} \Bigl(2^{j(s+\frac{2}{\rho_1})}\|\dot\Delta_j e^{t A_\Omega} f\|_{L_T^{\rho_1}(L^p)}\Bigr)^r
\lesssim\sum_{j\ge J_2} 2^{j s r}\|\dot\Delta_j f\|_{L^p}^r
< \frac{\varepsilon}{2}.
\end{aligned}
\end{equation}
For fixed $J_2$. Applying 
\eqref{semi1}, then
\begin{equation}\label{high1}
\begin{aligned}
&\sum_{\sqrt{\Omega}\leqslant 2^{j}<2^{J_2}} \Bigl(2^{j(s+\frac{2}{\rho_1})}\|\dot\Delta_j e^{t A_\Omega} f\|_{L_T^{\rho_1}(L^p)}\Bigr)^r
\lesssim\bigl(1 - e^{-c T \rho_1 2^{2J_2}})^r \sum_{\sqrt{\Omega}\leqslant 2^{j}<2^{J_2}} 2^{j s r}\|\dot\Delta_j f\|_{L^p}^r
\end{aligned}
\end{equation}
and
\begin{equation}\label{lowmid0}
\begin{aligned}
&\sum_{2^{2j}<\Omega} \Bigl(2^{j(s+\frac{2}{\rho_1})}\|\dot\Delta_j e^{t A_\Omega} f\|_{L_T^{\rho_1}(L^p)}\Bigr)^r\\
&\lesssim (1 - e^{-c T \rho_1 2^{2J_2}})^{r}\,\sum_{2^{2j}<\Omega} 2^{jrs}\|\dot\Delta_j f\|_{L^p}^{r}
\cdot \bigl(1+\Omega^{3r|\frac12-\frac1p|}2^{-6jr|\frac12-\frac1p|}\bigr),
\end{aligned}
\end{equation}
from which, choose $T$ small enough and $J_2$ (independent of $T$) large enough, with aid of \eqref{high0} and \eqref{high1}, we have
\begin{equation}\label{high}
\begin{aligned}
&\Bigl(\sum_{\Omega\leqslant 2^{2j}} 2^{jr(s+\frac{2}{\rho_1})}\|\dot\Delta_j e^{t A_\Omega} f\|_{L_T^{\rho_1}(L^p)}^{r}\Bigr)^{\frac{1}{r}}
\leq C\bigl(1 - e^{-c T \rho_1 2^{2J_2}})\|f\|_{\dot{B}_{p,r}^{s}}^{h_{\Omega}}+\frac{\varepsilon}{2}< \varepsilon
\end{aligned}
\end{equation}
and by \eqref{lowmid0} we have
\begin{equation}\label{lowmid}
\begin{aligned}
&\Bigl(\sum_{2^{2j}<\Omega} 2^{jr(s+\frac{2}{\rho_1})}\|\dot\Delta_j e^{t A_\Omega} f\|_{L_T^{\rho_1}(L^p)}^{r}\Bigr)^{\frac{1}{r}}\\
&\lesssim \bigl(1 - e^{-c T \rho_1 2^{2J_2}})\bigl(\|f\|_{\dot{B}_{p,r}^{s}}^{\ell_{\Omega}} +
\Omega^{3|\frac{1}{2} - \frac{1}{p}|}\|f\|_{\dot{B}_{p,r}^{s-3+\frac{6}{p}}}^{\ell_{\Omega}}\bigr)< \varepsilon.
\end{aligned}
\end{equation}
Summing up \eqref{high} and \eqref{lowmid}, we obtained the desired results.
\section{The Proof of Theorem \ref{wp}}

In this section, we aim at proving Theorem~\ref{wp}. For simplify the expression,  we use $\ell,\, h$ to denote $\ell_\Omega,\, h_{\Omega}$. Let us begin with the well-posedness. 

By Duhamel formula, the solution of rotating Navier-Stokes is given by \[
u_{\Omega}(t) = e^{tA_\Omega} u_{0,\Omega} +\Psi(u_{\Omega},u_{\Omega}),\]
where
$$\Psi(u_{\Omega},u_{\Omega})=- \int_0^t e^{(t-\tau)A_\Omega} \PP \nabla \cdot (u_{\Omega} \otimes u_{\Omega})(\tau) \, d\tau.$$ Consequently in terms of the classical Kato's solution, we state the following uniform bilinear estimates, which is the cornerstone of the well-posedness:
\begin{proposition}\label{besti}
    Let $f,\, g$ to be distributions. Let $2 \leq q \leq p \leq 2q\leq\infty$, and assume
    \begin{equation*}
        \frac{9}{4q} - \frac{3}{4p} < 1 < \frac{2}{q} + \frac{1}{p}.
    \end{equation*}
 Then
    \begin{equation*}
        \begin{aligned}
                  \bigl\|\Psi(f,g)\bigr\|_{X_{q,p}^{\Omega}} 
             \lesssim \|f\|_{X_{q,p}^{\Omega}}\|g\|_{X_{q,p}^{\Omega}}.
        \end{aligned}
    \end{equation*}
\end{proposition}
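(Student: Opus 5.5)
We reduce the bilinear bound to estimates on the product $f\otimes g$ via Proposition~\ref{linearesti}, applied with $\rho=1$ and with $\rho_1\in\{1,\infty\}$. Since $\PP\nabla\cdot$ is a homogeneous multiplier of order one, bounded on each dyadic block in $L^p$, the high-frequency part is controlled by \eqref{high-estimate2} (with $s=-1+\frac3p$):
\[
\|\Psi(f,g)\|^{h}_{\tilde L^\infty_T(\dot B^{-1+\frac3p}_{p,\infty})\cap \tilde L^1_T(\dot B^{1+\frac3p}_{p,\infty})}\lesssim \|f\otimes g\|^{h}_{\tilde L^1_T(\dot B^{\frac3p}_{p,\infty})} .
\]
For the low-frequency part we use \eqref{low-estimate2} with $p$ replaced by $q$ and $s=-4+\frac9q$. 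Because $2^{j}<\Omega^{1/2}$ there, the second term dominates the first; indeed $\Omega^{3(\frac12-\frac1q)}2^{-j(3-\frac6q)}\ge1$. After multiplying by the weight $\Omega^{\frac32-\frac3q}$ we therefore need
\[
\Omega^{\frac32-\frac3q}\|\Psi(f,g)\|^{\ell}_{X}\lesssim \Omega^{\frac92-\frac9q}\|f\otimes g\|^{\ell}_{\tilde L^1_T(\dot B^{-8+\frac{15}q}_{q,\infty})}.
\]
This step also explains the choice of exponents in $X_{0,q,p}^\Omega$.

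Next we use Bony's decomposition $fg=T_fg+T_gf+R(f,g)$, and split each factor as $f=f^\ell+f^h$. This gives terms of type low–low, low–high, and high–high, and each is estimated in both the $h$-norm and the $\ell$-norm of the output. Throughout we track powers of $\Omega$: the only conversions we allow are $2^{j}\lesssim\Omega^{1/2}$ for low blocks and $2^{j}\gtrsim\Omega^{1/2}$ for high blocks, and we spend these surplus powers of $2^j$ so that all $\Omega$-powers cancel. The resulting constants are then uniform in $\Omega$, as they must be for the limit argument later.

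\emph{Outputs at high frequency.}
\begin{itemize}[leftmargin=*]
\item Low$\times$low products. These are nonzero at high output frequency only near $2^j\sim\Omega^{1/2}$. We place both factors in $L^q$ and use Bernstein $L^{q/2}\to L^p$, which is possible because $p\le 2q$. The surplus regularity of the low norm ($-4+\frac9q$ versus $-1+\frac3q$) is converted into $\Omega$-powers that exactly absorb $\Omega^{-(\frac32-\frac3q)}$ for each factor.
\item Paraproducts with a high factor. These are estimated by interpolating $\tilde L^\infty\cap\tilde L^1$ into $\tilde L^2(\dot B^{\frac3p}_{p,\infty})$ and putting the other factor in $L^\infty$ via Bernstein.
\end{itemize}

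\emph{Outputs at low frequency.} These are the genuinely new and delicate terms.
\begin{itemize}[leftmargin=*]
\item High–high remainder $R(f^h,g^h)$, projected to $2^j<\Omega^{1/2}$. We put each factor in $L^p$, so the product lies in $L^{p/2}$. We then use Bernstein from $L^{p/2}$ to $L^q$ (valid since $q\ge p/2$), at the cost of $2^{j(\frac6p-\frac3q)}$.
  \begin{itemize}
  \item Summing over $k\ge j$ converges only if $\frac3p+\frac3p-1+(-8+\frac{15}q)+\frac6p-\frac3q\ldots$ has the correct sign. This is where the condition $\frac{9}{4q}-\frac{3}{4p}<1$ should enter: it makes the total exponent of $2^j$ positive, so the sum over low $j$ is controlled by its top term $2^j\sim\Omega^{1/2}$, with powers of $\Omega$ matching.
  \item Finally, the $\ell^\infty$ summation in $k$ must converge. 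This requires the standard remainder condition, the sum of regularities being positive, i.e.\ $\frac6p>0$ after Bernstein, which holds.
  \end{itemize}
\item Low–low paraproducts at low output. These use the low norms with $L^\infty\times L^q$ and Bernstein $L^q\to L^\infty$. Convergence of the paraproduct in the low index requires $-4+\frac9q-\frac3q<0$, i.e.\ $q>\frac32$, and the regularity bookkeeping is where $1<\frac2q+\frac1p$ appears.
\item Mixed low–high paraproducts landing at low output frequency. These occur only at the threshold scale and are handled like the first case.
\end{itemize}

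The main obstacle is the high–high-to-low interaction. This is exactly the term that forced $p\le4$ in \cite{CMZ2013}. Here we must verify, by careful exponent bookkeeping, that the supercritical low regularity $-8+\frac{15}{q}$ combined with the weight $\Omega^{\frac92-\frac9q}$ yields a nonnegative net power of $\Omega$ and a convergent dyadic sum precisely under the stated inequalities. If a borderline appears, we will first try shifting the time-integrability split, putting $\tilde L^2\times\tilde L^2$ instead of $\tilde L^\infty\times\tilde L^1$, to gain flexibility in the exponents.
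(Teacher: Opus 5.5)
Your overall route is the paper's: reduce through Proposition~\ref{linearesti}, Bony-decompose, and treat the high--high remainder at low output by Bernstein $L^{p/2}\to L^q$. However, the one estimate that carries the proposition is never actually carried out, and the bookkeeping you set up for it is wrong, so as written it fails. The first problem is the reduction. In \eqref{low-estimate2} with $p\to q$, $\rho=1$, $s=-4+\frac9q$, the source index is $s-5+\frac6q+\frac2\rho=-7+\frac{15}q$. After the weight $\Omega^{\frac32-\frac3q}$, what you must prove is therefore
\[
\Omega^{3-\frac6q}\|f\otimes g\|^{\ell}_{\tilde L^1_T(\dot B^{-6+\frac{15}{q}}_{q,\infty})}\lesssim\|f\|_{X^\Omega_{q,p}}\|g\|_{X^\Omega_{q,p}},
\]
which is what matches $X^\Omega_{0,q,p}$. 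Your target $\Omega^{\frac92-\frac9q}\|f\otimes g\|^\ell_{\tilde L^1_T(\dot B^{-8+\frac{15}q}_{q,\infty})}$ is not invariant under $u\mapsto\lambda u(\lambda^2t,\lambda x)$, $\Omega\mapsto\lambda^2\Omega$; it picks up a factor $\lambda^{1-\frac6q}$, so it cannot be bounded uniformly in $\Omega$. Concretely, in the high--high term it leaves either a stray factor $\Omega^{\frac12-\frac3q}$ or a divergent supremum over low frequencies.

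The second problem is that you attribute that term to the wrong hypothesis. The condition $\frac9{4q}-\frac3{4p}<1$ is automatic when $2\le q\le p\le 2q$ (the left side is at most $\frac{15}{8q}\le\frac{15}{16}$), so it cannot be what closes the key estimate. With the correct exponents, for $2^k<\Omega^{1/2}$,
\[
2^{k(\frac{15}q-6)}\|\dot\Delta_kR(f^h,g^h)\|_{L^1_TL^q}\lesssim 2^{6k(\frac2q+\frac1p-1)}\sum_{2^{k'}\ge\Omega^{1/2}}2^{-\frac{6k'}p}\,\|f\|^h_{\tilde L^\infty_T(\dot B^{-1+\frac3p}_{p,\infty})}\|g\|^h_{\tilde L^1_T(\dot B^{1+\frac3p}_{p,\infty})}.
\]
The $k'$-sum is $\lesssim\Omega^{-\frac3p}$. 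The exponent of $2^k$ is positive precisely because $\frac2q+\frac1p>1$, so the supremum over low $k$ is $\lesssim\Omega^{\frac6q+\frac3p-3}$, and all powers of $\Omega$ cancel against $\Omega^{3-\frac6q}$. This is \eqref{mm13}.

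The same hypothesis, via $1<\frac2q+\frac1p\le\frac3q$, gives $q<3$. That is needed for the low--low remainder $R(f^\ell,g^\ell)$ at low output, a term missing from your list. The hypothesis plays no role in the low--low paraproducts: your $L^\infty\times L^q$ treatment there needs only $q>\frac32$. It also avoids the paper's use of $\frac9q-\frac3p<4$, which comes from its $L^{\tilde p}\times L^p$ split in $T_fg^\ell$.

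Two of your high-output steps also fail as sketched with third index $\infty$:
\begin{enumerate}
\item In $\tilde L^2\times\tilde L^2$, the blocks of $\dot S_{k'-1}f$ are only uniformly bounded in $L^2_TL^\infty$, so their sum does not converge.
\item Putting $\dot S_{k'-1}f^\ell$ in $L^q$ diverges as $j\to-\infty$ when $q<\frac94$.
\end{enumerate}
The paper avoids the high-output case split altogether. Bernstein and $2^{j(3-\frac6q)}\le\Omega^{\frac32-\frac3q}$ for $2^j<\Omega^{1/2}$ give
\[
\|f\|_{\tilde L^\infty_T(\dot B^{\frac3p-1}_{p,\infty})}+\|f\|_{\tilde L^1_T(\dot B^{\frac3p+1}_{p,\infty})}\lesssim\|f\|_{X^\Omega_{q,p}},
\]
after which the classical $\tilde L^\infty\times\tilde L^1$ product estimate applies.
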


The rest of this section would be mostly devoted to the proof of above Proposition. In light of uniform bilinear estimates, we may obtain the well-posedness by a standard fixed point argument.
\subsection{Proof of Proposition \ref{besti}}

In light of Proposition  ~\ref{linearesti} with taking $p=q$, $s=\frac{3}{q}-1$ in high frequencies and $s=\frac{9}{q}-4$ in low frequencies, there holds
\begin{equation*}
\begin{aligned}
\bigl\|\Psi(f,g)\bigr\|_{X_{q,p}^{\Omega}}\lesssim \|\PP \nabla \cdot (f\otimes g)\|_{\tilde{L}_t^1(\dot{B}_{q,\infty}^{\frac{3}{q}-1})}+\Omega^{3 - \frac{6}{q})}\|\PP \nabla \cdot (f\otimes g)\|^{\ell}_{\tilde{L}_t^1(\dot{B}_{q,\infty}^{\frac{15}{q}-7})}.
\end{aligned}
\end{equation*}
Apparently Proposition \ref{besti} is the direct result of the following two lemmas.
\begin{lemma}\label{uulow}
Let $f$ and $g$ be distributions, $2\le q\le p\le 2q$ and $\frac{3}{p}-\frac{3}{q}+1>0$. Then
\begin{equation*}
    \|\PP \nabla \cdot (f \otimes g)\|_{\tilde{L}_{t}^{1}(\dot{B}_{q,\infty}^{\frac{3}{q}-1})}\lesssim\|f\|_{X_{q,p}^{\Omega}} \|g\|_{X_{q,p}^{\Omega}}.
\end{equation*}
\end{lemma}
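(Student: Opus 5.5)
The plan is to reduce the estimate, via Bony's paraproduct decomposition and a low/high splitting of each factor, to product estimates in which every low-frequency piece is measured in a standard critical $L^q$-based Besov norm. First, $\PP\nabla\cdot$ is a homogeneous Fourier multiplier of degree one. It is therefore bounded blockwise from $\dot\Delta_j$-localized $L^q$ functions with a factor $2^j$, so it suffices to prove
\[
\|f\otimes g\|_{\tilde L^1_t(\dot B^{3/q}_{q,\infty})}\lesssim \|f\|_{X^\Omega_{q,p}}\|g\|_{X^\Omega_{q,p}} .
\]
The basic observation is that the weight $\Omega^{\frac32-\frac3q}$ dominates the low part. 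Indeed, since $2^j<\Omega^{1/2}$ on the support of $f^{\ell}$ and $3-\frac6q\ge0$, we have $\Omega^{\frac32-\frac3q}\ge 2^{j(3-\frac6q)}$. Hence
\[
\|f^{\ell}\|_{\tilde L^\infty_t(\dot B^{-1+\frac3q}_{q,\infty})\cap\tilde L^1_t(\dot B^{1+\frac3q}_{q,\infty})}\le \|f\|_{X^\Omega_{q,p}} .
\]
So $f^\ell$ is controlled in the usual critical $L^q$ framework, and $f^h$ in the critical $L^p$ framework. I then write $f=f^\ell+f^h$ and $g=g^\ell+g^h$, and decompose each of the four products as $T_ab+T_ba+R(a,b)$. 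In every term, one factor is placed in $\tilde L^\infty_t$ and the other in $\tilde L^1_t$.

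\emph{Low--low terms.} These are the classical Navier--Stokes estimates in $\dot B^{-1+3/q}_{q,\infty}$. The paraproducts use $\dot B^{-1+3/q}_{q,\infty}\hookrightarrow \dot B^{-1}_{\infty,\infty}$. The remainder lands in $\dot B^{6/q}_{q/2,\infty}$, since $6/q>0$, and Bernstein then embeds it into $\dot B^{3/q}_{q,\infty}$.

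\emph{High--high remainder.} $R(f^h,g^h)$ lies in $\tilde L^1_t(\dot B^{6/p}_{p/2,\infty})$, which is legitimate because the regularity sum $6/p>0$. Since $p\le 2q$, Bernstein gives $\dot B^{6/p}_{p/2,\infty}\hookrightarrow \dot B^{6/p-3(2/p-1/q)}_{q,\infty}=\dot B^{3/q}_{q,\infty}$.

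\emph{Mixed remainder $R(f^\ell,g^h)$.} Only blocks with $2^j\sim\Omega^{1/2}$ interact. Hölder puts the product in $L^{pq/(p+q)}$ with regularity $(-1+\frac3q)+(1+\frac3p)>0$, and Bernstein to $L^q$ costs exactly $3/p$.

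\emph{Paraproducts with low-frequency factor in $L^\infty$.} For $T_{g^h}f^\ell$, and symmetrically $T_{f^h}g^\ell$, I bound $S_{j-1}g^h$ in $L^\infty$ by $2^{j}\|g^h\|_{\dot B^{-1}_{\infty,\infty}}$, using $\dot B^{-1+3/p}_{p,\infty}\hookrightarrow\dot B^{-1}_{\infty,\infty}$. The block $\dot\Delta_j f^\ell$ stays in $L^q$.

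The main obstacle is the paraproducts whose output must be measured in $L^q$ while the high-frequency factor $\dot\Delta_j g^h$ only lives in $L^p$ with $p\ge q$. These are $T_{f^h}g^h$, $T_{f^\ell}g^h$, and their symmetric counterparts. Integrability cannot be lowered by Bernstein, so I instead use Hölder with $\frac1q=\frac1r+\frac1p$, putting $S_{j-1}f$ in $L^r$ with $r\ge q$.

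For $f^h$, the embedding $\dot B^{-1+3/p}_{p,\infty}\hookrightarrow \dot B^{-1+3/q}_{r,\infty}$ follows since $-1+\frac3p-3(\frac1p-\frac1r)=-1+\frac3q-\frac3p$. Summing the low frequencies in $S_{j-1}$ requires this index to be negative, i.e. $\frac3p-\frac3q+1>0$, which is exactly the hypothesis of the lemma. Then $\|S_{j-1}f^h\|_{L^\infty_tL^r}\lesssim 2^{j(1-\frac3q+\frac3p)}\|f\|_{X}$. Combined with $\|\dot\Delta_jg^h\|_{L^1_tL^p}\lesssim 2^{-j(1+\frac3p)}\|g\|_X$, this gives the required $2^{-j\cdot 3/q}$.

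For $f^\ell$, the embedding $\dot B^{-1+3/q}_{q,\infty}\hookrightarrow \dot B^{-1+3/q-3/p}_{r,\infty}$ holds because $r\ge q$, and the index is again negative by the same hypothesis, so the identical computation applies. Collecting all terms and taking $\sup_j$ yields the lemma. The only places the hypotheses enter are $p\le 2q$ in the high--high remainder and $\frac3p-\frac3q+1>0$ in these mixed-integrability paraproducts, which is where I would concentrate the care.
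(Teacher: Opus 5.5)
Your argument is correct, but it is organized differently from the paper's. You split both factors into low and high parts and run twelve separate estimates, keeping the low pieces in the $L^q$-based critical norms. The paper never splits inside the product. It first uses Bernstein ($q\le p$) together with the same weight comparison $2^{j(3-6/q)}\le\Omega^{3/2-3/q}$ that you use, to put the low-frequency part into the $L^p$-based critical norm:
\[
\|f\|_{\tilde L^\infty_t(\dot B^{-1+3/p}_{p,\infty})}+\|f\|_{\tilde L^1_t(\dot B^{1+3/p}_{p,\infty})}\lesssim\|f\|_{X^\Omega_{q,p}} .
\]
It then runs a single Bony decomposition of $f\otimes g$. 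The paraproducts are handled by H\"older $L^{\tilde p}\times L^p\to L^q$ with $\frac1q=\frac1{\tilde p}+\frac1p$; this is exactly your ``main obstacle'' computation, applied to all of $f$ at once. The remainder is handled by $L^p\times L^p\to L^{p/2}$ followed by Bernstein into $L^q$.

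Your finer split does show that the low--low terms, the mixed remainders, and the paraproducts with the $S_{j-1}$ factor in $L^\infty$ need no relation between $p$ and $q$. For the lemma as stated, however, it buys nothing. Your own first observation combined with $\dot B^{-1+3/q}_{q,\infty}\hookrightarrow\dot B^{-1+3/p}_{p,\infty}$ would collapse your case analysis to the paper's three terms.

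One correction concerns the $T_{f^h}g^h$ paraproduct. There you need $r\ge p$, not merely $r\ge q$, to embed $\dot B^{-1+3/p}_{p,\infty}$ into an $L^r$-based space. Also, the target space should read $\dot B^{-1+3/q-3/p}_{r,\infty}$, not $\dot B^{-1+3/q}_{r,\infty}$. Since $\frac1r=\frac1q-\frac1p$, the condition $r\ge p$ is equivalent to $p\le 2q$. So that hypothesis is also used in this paraproduct, exactly where the paper notes that $p\le 2q$ implies $p\le\tilde p$, and not only in the high--high remainder as you state.
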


\begin{lemma}\label{uuulow}
 Let $f,g$ to be distributions and $2 \leq q \leq p \leq 2q$, assume $\frac{9}{4q} - \frac{3}{4p}< 1 < \frac{2}{q} + \frac{1}{p}$. Then
\begin{equation*}
\begin{aligned}
\Omega^{\,3-\frac{6}{q}}
\bigl\|\PP \nabla \cdot (f \otimes g)\bigr\|^{\ell}_{\tilde{L}_{t}^{1}
(\dot{B}_{q,\infty}^{\frac{15}{q}-7})}\lesssim\|f\|_{X_{q,p}^{\Omega}} \|g\|_{X_{q,p}^{\Omega}}.
        \end{aligned}
    \end{equation*}
\end{lemma}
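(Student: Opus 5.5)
The plan is to prove Lemma~\ref{uuulow} by Bony's paraproduct decomposition. We write $f=f^{\ell}+f^{h}$ and $g=g^{\ell}+g^{h}$. Since $\PP$ is bounded on each dyadic block and $\nabla$ costs a factor $2^j$, it suffices to bound, uniformly in $2^j<\Omega^{1/2}$,
\[
\Omega^{3-\frac6q}\,2^{j(\frac{15}{q}-6)}\,\bigl\|\dot\Delta_j(f\otimes g)\bigr\|_{L^1_t(L^q)}
\]
by $\|f\|_{X^{\Omega}_{q,p}}\|g\|_{X^{\Omega}_{q,p}}$. We split into three interactions: low$\times$low, low$\times$high, and high$\times$high. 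In each we apply Bony's decomposition $T_ab+T_ba+R(a,b)$. Throughout, the time integrability is $L^\infty_t\times L^1_t$: one factor in the $\tilde L^\infty$ norm and the other in the $\tilde L^1$ norm. This works because both $X^{\Omega}_{q,p}$ components contain both.

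\textbf{Low$\times$low.} The weight $\Omega^{3-6/q}$ splits exactly as $\Omega^{\frac32-\frac3q}\cdot\Omega^{\frac32-\frac3q}$, matching the two low-frequency norms. For the paraproducts we use Bernstein, $\dot B^{9/q-4}_{q,\infty}\hookrightarrow \dot B^{6/q-4}_{\infty,\infty}$. Since $6/q-4<0$, the paraproduct $T_{f^\ell}g^\ell$ is bounded in $\tilde L^1_t(\dot B^{(6/q-4)+(9/q-2)}_{q,\infty})=\tilde L^1_t(\dot B^{15/q-6}_{q,\infty})$, which is exactly the target regularity. For the remainder we estimate in $L^{q/2}$ and then use Bernstein back to $L^q$, gaining $2^{3j/q}$. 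The remaining sum runs over $k\ge j-2$ with $2^k\lesssim\Omega^{1/2}$. If the total regularity $18/q-6$ is positive we can sum down. Otherwise the sum is finite, and the missing powers are bounded by powers of $\Omega^{1/2}$; we check these are compensated by the $\Omega$ weights, in the same way as in the high$\times$high case below.

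\textbf{High$\times$high to low.} This is where the hypothesis $1<\frac2q+\frac1p$ should appear. Because the output frequency is below $\Omega^{1/2}$ and both inputs lie above it, only the remainder $R(f^h,g^h)$ contributes, up to the boundary blocks with $2^k\sim\Omega^{1/2}$. We bound the product in $L^{p/2}$, which is allowed since $p\le 2q$, and then use Bernstein to pass from $L^{p/2}$ to $L^q$, gaining $2^{j(6/p-3/q)}$. The high-frequency norms give
\[
\|\dot\Delta_k f^h\,\dot\Delta_k g^h\|_{L^1_t(L^{p/2})}\lesssim 2^{-6k/p}\|f\|_{X}\|g\|_{X},
\]
so the sum over $2^k\gtrsim\Omega^{1/2}$ produces $\Omega^{-3/p}$. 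The total power of $2^j$ is then $2^{j(12/q+6/p-6)}$. This exponent is positive precisely when $\frac2q+\frac1p>1$, so for $2^j<\Omega^{1/2}$ it is at most $\Omega^{6/q+3/p-3}$. Combining, $\Omega^{3-6/q}\cdot\Omega^{6/q+3/p-3}\cdot\Omega^{-3/p}=1$, which is the desired uniform bound.

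\textbf{Low$\times$high (main obstacle).} I expect this mixed interaction to be the delicate step, and the condition $\frac9{4q}-\frac3{4p}<1$ to be needed here. Frequency localization forces the high factor to sit at $2^k\sim\Omega^{1/2}$ when the output is low, or else the term is a remainder at comparable frequencies. I would place $f^\ell$ in $L^{r}$ with $\frac1r=\frac1q-\frac1p$, using Bernstein from $L^q$, and $g^h$ in $L^p$, so the product lies in $L^q$. I would then compare the powers of $2^k\sim\Omega^{1/2}$ coming from the regularities $9/q-4$ or $9/q-2$ and $3/p\mp1$, together with the weights $\Omega^{3-6/q}$ and $\Omega^{3/2-3/q}$. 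The free parameter is how the time integrability and the Bernstein gains are split between the two factors, and I would try to choose this split so that all residual powers of $\Omega$ cancel. If an imbalance remains, I would interpolate the low-frequency factor between its $\tilde L^\infty$ and $\tilde L^1$ norms, e.g. via $\tilde L^{4/3}$ or $\tilde L^4$, which is the likely source of the fractions $\frac94$ and $\frac34$ in that hypothesis. The paraproduct and remainder pieces are then summed in $\ell^\infty$ as in the previous two cases.
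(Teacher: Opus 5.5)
Your overall scheme is the paper's: Bony decomposition, Bernstein from $L^{q/2}$ or $L^{p/2}$ up to $L^q$ for the remainders, and the split $\Omega^{3-\frac6q}=\Omega^{\frac32-\frac3q}\cdot\Omega^{\frac32-\frac3q}$. Your high$\times$high estimate is correct. You use the threshold $2^{k}\gtrsim\Omega^{1/2}$ to do the sum and the condition $\frac2q+\frac1p>1$ to bound the output power; the paper does it the other way round.

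\textbf{The gap: low$\times$high is never estimated.} You single this interaction out as the main obstacle but leave it as a plan, and your diagnosis of it is off: no time interpolation is needed. The term $T_{g^h}f^\ell$ vanishes, and $R(f^\ell,g^h)$ only involves blocks with $2^{k'}\sim\Omega^{1/2}$. For $T_{f^\ell}g^h$, your first placement closes directly. With $\frac1q=\frac1{\tilde p}+\frac1p$, Bernstein gives
\[
\|\dot\Delta_m f^\ell\|_{L^\infty_t(L^{\tilde p})}\lesssim 2^{\frac{3m}{p}}\|\dot\Delta_m f^\ell\|_{L^\infty_t(L^{q})}\lesssim 2^{m(4+\frac3p-\frac9q)}\,\Omega^{\frac3q-\frac32}\|f\|_{X^{\Omega}_{q,p}}.
\]
Summing over $m\le k'-2$ requires exactly $4+\frac3p-\frac9q>0$, which is the hypothesis $\frac9{4q}-\frac3{4p}<1$ multiplied by $4$.

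So the fractions $\frac94,\frac34$ come from convergence of the low-frequency sum in a paraproduct; in the paper this is used in the bound for $T_fg^\ell$. They do not come from $\tilde L^{4/3}$ or $\tilde L^4$ interpolation. In fact the condition is automatic for $2\le q\le p\le 2q$, since $\frac9q-\frac3p\le\frac{15}{2q}\le\frac{15}4$.

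To finish, multiply by $\|\dot\Delta_{k'}g^h\|_{L^1_t(L^p)}\lesssim 2^{-k'(1+\frac3p)}\|g\|_{X^{\Omega}_{q,p}}$ and by $\Omega^{3-\frac6q}2^{j(\frac{15}q-6)}$. Using $2^j\sim 2^{k'}\sim\Omega^{1/2}$, the powers of $\Omega$ add up to $(3-\frac6q)+(\frac{15}{2q}-3)+(\frac32-\frac9{2q})+(\frac3q-\frac32)=0$.

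The paper's bound for $T_fg^h$ is the same computation without localizing at $\Omega^{1/2}$. It raises the regularity of the low part of $f$ by $3-\frac6q\ge0$ at the cost of $\Omega^{\frac32-\frac3q}$. It lowers the regularity of $g^h$ by $6-\frac{12}q\ge0$ at the gain of $\Omega^{\frac6q-3}$.

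\textbf{A smaller point in the low$\times$low remainder.} Drop the ``otherwise'' branch. The hypotheses give $1<\frac2q+\frac1p\le\frac3q$, hence $q<3$ and $\frac{18}q-6>0$, which is what the paper uses. If $\frac{18}q-6\le0$, your bound would grow like $(\Omega^{1/2}2^{-j})^{6-\frac{18}q}$ (logarithmically if $q=3$) as $j\to-\infty$. The $\Omega$-weights are exactly balanced in the low$\times$low case, so nothing could compensate for this growth.
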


\subsubsection{Proof of Lemma \ref{uulow}}
 First, the Bony decomposition implicates that
\begin{equation}\label{mm1}
    \|\PP \nabla \cdot (f \otimes g)\|_{\tilde{L}_{t}^{1}(\dot{B}_{q,\infty}^{-1+\frac{3}{q}})}
    \lesssim \|T_f g \|_{\tilde{L}_{t}^{1}(\dot{B}_{q,\infty}^{\frac{3}{q}})}+\|T_g f \|_{\tilde{L}_{t}^{1}(\dot{B}_{q,\infty}^{\frac{3}{q}})}+\|R(f,g) \|_{\tilde{L}_{t}^{1}(\dot{B}_{q,\infty}^{\frac{3}{q}})}.
\end{equation}
where
 \[
\dot{\Delta}_{k}T_{f}g        = \dot{\Delta}_{k}\Bigl(\sum_{k'}\dot{S}_{k'-1}f\;\dot{\Delta}_{k'}g\Bigr)
        = \sum_{|k-k'|\leq 4}\dot{\Delta}_{k}\bigl(\dot{S}_{k'-1}f\;\dot{\Delta}_{k'}g\bigr);
    \]
     \[
        \dot{\Delta}_{k}R(f,g)
        = \dot{\Delta}_{k}\Bigl(\sum_{k'}\tilde{\dot{\Delta}}_{k'}f\,
           \dot{\Delta}_{k'}g\Bigr)
        = \sum_{k \leq k'+2} \dot{\Delta}_{k}\bigl(\tilde{\dot{\Delta}}_{k'}f\,
           \dot{\Delta}_{k'}g\bigr).
    \]
For $q \leq p$, set $\frac{1}{q} = \frac{1}{\tilde{p}} + \frac{1}{p}$. The assumption $p \leq 2q$ implies $p \leq \tilde{p}$, and we thus obtain
\begin{align*}
    \| \dot{\Delta}_{k} T_{f} g \|_{L^q}
    &\lesssim \sum_{|k-k'|\leq 4} \| \dot{S}_{k'-1} f \dot{\Delta}_{k'} g \|_{L^q} \\
    &\lesssim \sum_{|k-k'|\leq 4} \sum_{j \leq k'-2} \| \dot{\Delta}_{j}f \|_{L^{\tilde{p}}} \| \dot{\Delta}_{k'}g \|_{L^p} \\
    &\lesssim \sum_{|k-k'|\leq 4} \left( \sum_{j \leq k'-2} 2^{(\frac{3}{p}-\frac{3}{q}+1)j} \right)
        \|f \|_{\dot{B}^{-1+\frac{3}{p}}_{p,\infty}} \| \dot{\Delta}_{k'} g \|_{L^p}.
\end{align*}
Applying Hölder's inequality, we obtain for $\frac{3}{p}-\frac{3}{q}+1>0$,
\begin{equation}\label{P1}
    \begin{aligned}
        \|T_f g \|_{\tilde{L}_{t}^{1}(\dot{B}_{q,\infty}^{\frac{3}{q}})}
        &\lesssim \|f \|_{\tilde{L}_{t}^{\infty}(\dot{B}_{p,\infty}^{\frac{3}{p}-1})}\|g \|_{\tilde{L}_{t}^{1}(\dot{B}_{p,\infty}^{1+\frac{3}{p}})} \lesssim\|f\|_{X_{q,p}^{\Omega}} \|g\|_{X_{q,p}^{\Omega}},
    \end{aligned}
\end{equation}
where we apply in the last inequality that
\begin{equation}
\begin{aligned}
\|f \|_{\tilde{L}_{t}^{\infty}(\dot{B}_{p,\infty}^{\frac{3}{p}-1})}&\lesssim\|f \|^{\ell}_{\tilde{L}_{t}^{\infty}(\dot{B}_{q,\infty}^{\frac{3}{q}-1})}
+\|f \|^{h}_{\tilde{L}_{t}^{\infty}(\dot{B}_{p,\infty}^{\frac{3}{p}-1})}\\&\lesssim\Omega^{\frac{3}{2}-\frac{3}{q}}
\|f\|^{\ell}_{\tilde{L}_{t}^{\infty}(\dot{B}_{q,\infty}^{-4+\frac{9}{q}})}+\|f \|^{h}_{\tilde{L}_{t}^{\infty}(\dot{B}_{p,\infty}^{\frac{3}{p}-1})}\lesssim\|f\|_{X_{q,p}^{\Omega}},
    \end{aligned}
\end{equation}
and similarly
\begin{equation}
\begin{aligned}
\|g \|_{\tilde{L}_{t}^{1}(\dot{B}_{p,\infty}^{\frac{3}{p}+1})}\lesssim\Omega^{\frac{3}{2}-\frac{3}{q}}\|g\|^{\ell}_{\tilde{L}_{t}^{1}(\dot{B}_{q,\infty}^{-2+\frac{9}{q}})}+\|g\|^{h}_{\tilde{L}_{t}^{1}(\dot{B}_{p,\infty}^{1+\frac{3}{p}})}\lesssim\|g\|_{X_{q,p}^{\Omega}}.
    \end{aligned}
\end{equation}
Symmetrically we can handle the other paraproduct term. As for the remainder, we have
\begin{align*}
    \| \dot{\Delta}_{k} R(f,g) \|_{L^q}
    &\lesssim 2^{(\frac{6}{p}-\frac{3}{q})k}
        \left\| \dot{\Delta}_{k} \left( \sum_{k'} \tilde{\dot{\Delta}}_{k'} f \dot{\Delta}_{k'} g \right) \right\|_{L^{p/2}} \\
    &\lesssim 2^{(\frac{6}{p}-\frac{3}{q})k} \sum_{k' \geq k-2}
        2^{(-\frac{6}{p})k'} 2^{(\frac{3}{p}-1)k'} \| \dot{\Delta}_{k'} f \|_{L^{p}}
        \cdot 2^{(1+\frac{3}{p})k'}\|\dot{\Delta}_{k'} g \|_{L^p},
\end{align*}
hence Hölder's inequality yields
\begin{equation}\label{R1}
    \begin{aligned}
        \|R(f, g) \|_{\tilde{L}_{t}^{1}(\dot{B}_{q,\infty}^{\frac{3}{q}})}
        &\lesssim \|f \|_{\tilde{L}_{t}^{\infty}(\dot{B}_{p,\infty}^{\frac{3}{p}-1})}\|g \|_{\tilde{L}_{t}^{1}(\dot{B}_{p,\infty}^{1+\frac{3}{p}})} \lesssim\|f\|_{X_{q,p}^{\Omega}} \|g\|_{X_{q,p}^{\Omega}}.
    \end{aligned}
\end{equation}
Consequently \eqref{P1} and \eqref{R1} lead to Lemma \ref{uulow}.


\subsubsection{Proof of Lemma \ref{uuulow}}
Again using Bony's decomposition, we have
\begin{equation}\label{mm00}
    \|\PP \nabla \cdot (f \otimes g)\|^\ell_{\tilde{L}_{t}^{1}(\dot{B}_{q,\infty}^{\frac{15}{q}-7})}
    \lesssim \|T_f g \|^\ell_{\tilde{L}_{t}^{1}(\dot{B}_{q,\infty}^{\frac{15}{q}-6})}+\|T_g f \|^\ell_{\tilde{L}_{t}^{1}(\dot{B}_{q,\infty}^{\frac{15}{q}-6})}+\|R(f,g) \|^\ell_{\tilde{L}_{t}^{1}(\dot{B}_{q,\infty}^{\frac{15}{q}-6})}.
\end{equation}   
We begin with paraproduct $T_f g^{\ell}$,
For $q \leq p$, set $\frac{1}{q} = \frac{1}{\tilde{p}} + \frac{1}{p}$.  Since $p \leq 2q$, we obtain $p \le \tilde{p}$, and consequently
    \begin{align*}
\|\dot{\Delta}_{k}T_{f}g^{\ell}\|_{L^{q}}
        \lesssim \sum_{|k-k'|\leq 4}
            \Bigl(\sum_{j \leq k'-2} 2^{(\frac{3}{p} - \frac{3}{q} + 4 - \frac{6}{q})j}\Bigr)
            \|f\|_{\dot{B}^{\frac{3}{p}-4+\frac{6}{q}}_{p,\infty}}
            \|\dot{\Delta}_{k'}g^{\ell}\|_{L^{p}} .
    \end{align*}
For $q \ge 2$, $p \ge q$, and $\frac{9}{q} - \frac{3}{p}< 4$, we obtain
    \begin{equation}\label{mm11}
        \begin{aligned}
            \|T_{f}g^{\ell}\|_{\tilde{L}_{t}^{1}(\dot{B}_{q,\infty}^{\frac{15}{q}-6})}^{\ell}
            &\lesssim \|f\|_{\tilde{L}_{t}^{\infty}(\dot{B}_{p,\infty}^{\frac{3}{p}+\frac{6}{q}-4})}
            \|g^{\ell}\|_{\tilde{L}_{t}^{1}(\dot{B}_{p,\infty}^{-2+\frac{3}{p}+\frac{6}{q}})} \\
            &\lesssim \Bigl(\Omega^{-3|\frac{1}{2}-\frac{1}{q}|}
            \|f\|_{\tilde{L}_{t}^{\infty}(\dot{B}_{p,\infty}^{\frac{3}{p}-1})}^{h}
            + \|f\|_{\tilde{L}_{t}^{\infty}(\dot{B}_{q,\infty}^{\frac{9}{q}-4})}^{\ell}\Bigr)
\|g^{\ell}\|_{\tilde{L}_{t}^{1}(\dot{B}_{q,\infty}^{-2+\frac{9}{q}})}.
        \end{aligned}
    \end{equation}
Next for $T_f g^{h}$. When $q \leq p\leq 2q$, 
    \begin{align*}
\|\dot{\Delta}_{k}T_{f}g^{h}\|_{L^{q}}
        \lesssim \sum_{|k-k'|\leq 4}
            \Bigl(\sum_{j \leq k'-2} 2^{(\frac{3}{p} - \frac{3}{q} + 1)j}\Bigr)
            \|f\|_{\dot{B}^{\frac{3}{p}-1}_{p,\infty}}
            \|\dot{\Delta}_{k'}g^{h}\|_{L^{p}},
    \end{align*}
from which, for  $q \ge 2$ and $\frac{3}{q} - \frac{3}{p}<1$, we obtain
    \begin{equation}
        \begin{aligned}
            \|T_{f}g^{h}\|_{\tilde{L}_{t}^{1}(\dot{B}_{q,\infty}^{\frac{15}{q}-6})}^{\ell}
            &\lesssim \|f\|_{\tilde{L}_{t}^{\infty}(\dot{B}_{p,\infty}^{\frac{3}{p}-1})}
    \|g^{h}\|_{\tilde{L}_{t}^{1}(\dot{B}_{p,\infty}^{-5+\frac{3}{p}+\frac{12}{q}})} \\
            &\lesssim \Bigl(
            \|f\|_{\tilde{L}_{t}^{\infty}(\dot{B}_{p,\infty}^{\frac{3}{p}-1})}^{h}
            + \Omega^{\frac{3}{2}-\frac{3}{q}}\|f\|_{\tilde{L}_{t}^{\infty}(\dot{B}_{q,\infty}^{\frac{9}{q}-4})}^{\ell}\Bigr)
            \Omega^{-3+\frac{6}{q}}\|g^{h}\|_{\tilde{L}_{t}^{1}(\dot{B}_{p,\infty}^{1+\frac{3}{p}})}.
        \end{aligned}
    \end{equation}
Similar to estimate $T_f g$, we also obtain
    \begin{equation}\label{mm12}
        \begin{aligned}
            &\|T_{g}f\|_{\tilde{L}_{t}^{1}(\dot{B}_{q,\infty}^{\frac{15}{q}-6})}^{\ell}\lesssim \Bigl(
            \|g\|_{\tilde{L}_{t}^{\infty}(\dot{B}_{p,\infty}^{\frac{3}{p}-1})}^{h}
            + \Omega^{\frac{3}{2}-\frac{3}{q}}\|g\|_{\tilde{L}_{t}^{\infty}(\dot{B}_{q,\infty}^{\frac{9}{q}-4})}^{\ell}\Bigr)
            \Omega^{-3+\frac{6}{q}}\|f^{h}\|_{\tilde{L}_{t}^{1}(\dot{B}_{p,\infty}^{1+\frac{3}{p}})}\\
            & \ \ \ \ \ \ \ \ \quad \quad +\Bigl(\Omega^{-\frac{3}{2}+\frac{3}{q}}
            \|g\|_{\tilde{L}_{t}^{\infty}(\dot{B}_{p,\infty}^{\frac{3}{p}-1})}^{h}
            + \|g\|_{\tilde{L}_{t}^{\infty}(\dot{B}_{q,\infty}^{\frac{9}{q}-4})}^{\ell}\Bigr)
            \|f^{\ell}\|_{\tilde{L}_{t}^{1}(\dot{B}_{q,\infty}^{-2+\frac{9}{q}})}.
        \end{aligned}
    \end{equation}
For the remainder term, when $\frac{2}{q} + \frac{1}{p} > 1$ and $2 \leq q \leq p \leq 2q$. Spectral localization yields
   $\tilde{\dot{\Delta}}_{k'}f^{h} \triangleq \sum_{|k-k'|\leq 1}\dot{\Delta}_{k}f^{h}$. For $2 \leq p \leq 2q$, H\"{o}lder's and Bernstein's inequalities give
    \begin{align*}
        \|\dot{\Delta}_{k}R(f^{h},g^{h})\|_{L^{q}}
        &\lesssim 2^{(\frac{6}{p}-\frac{3}{q})k}
            \Bigl\|\dot{\Delta}_{k}
            \Bigl(\sum_{k'}\tilde{\dot{\Delta}}_{k'}f^{h}\,
            \dot{\Delta}_{k'}g^{h}\Bigr)\Bigr\|_{L^{p/2}} \\
        &\lesssim 2^{(\frac{6}{p}-\frac{3}{q})k}
            \sum_{k'\geq k-2} \|\dot{\Delta}_{k'}f^{h}\|_{L^{p}}
            \|\dot{\Delta}_{k'}g^{h}\|_{L^{p}} \\
        &\lesssim 2^{(\frac{6}{p}-\frac{3}{q})k}
            \sum_{k'\geq k-2} 2^{(-\frac{6}{p}-\frac{12}{q}+6)k'}
            2^{(-1+\frac{3}{p})k'}\|\dot{\Delta}_{k'}f^{h}\|_{L^{p}}\;
            2^{(\frac{3}{p}+\frac{12}{q}-5)k'}\|\dot{\Delta}_{k'}g^{h}\|_{L^{p}} .
    \end{align*}
Consequently, for $\frac{1}{p} + \frac{2}{q} > 1$ and $p \leq 2q$,
    \begin{equation}\label{mm13}
        \begin{aligned}
            \|(R(f^{h},g^{h}))\|_{\tilde{L}_{t}^{1}(\dot{B}_{q,\infty}^{\frac{15}{q}-6})}^{\ell}
            &\lesssim \|f^{h}\|_{\tilde{L}_{t}^{\infty}(\dot{B}_{p,\infty}^{-1+\frac{3}{p}})}
                \|g^{h}\|_{\tilde{L}_{t}^{1}(\dot{B}_{p,\infty}^{1+\frac{3}{p}+ \frac{12}{q}-6})} \\
            &\lesssim \Omega^{-6|\frac{1}{2}-\frac{1}{q}|}
               \|f^{h}\|_{\tilde{L}_{t}^{\infty}(\dot{B}_{p,\infty}^{-1+\frac{3}{p}})}\|g^{h}\|_{\tilde{L}_{t}^{1}(\dot{B}_{p,\infty}^{1+\frac{3}{p}})}.
        \end{aligned}
    \end{equation}
For $2 \leq p \leq 2q$, applying H\"{o}lder's and Bernstein's inequalities again gives
    \begin{align*}
        \|\dot{\Delta}_{k}R(f^{\ell},g^{\ell})\|_{L^{q}}
        &\lesssim 2^{(\frac{6}{q}-\frac{3}{q})k}
            \Bigl\|\dot{\Delta}_{k}
            \Bigl(\sum_{k'}\tilde{\dot{\Delta}}_{k'}f^{\ell}\,
            \dot{\Delta}_{k'}g^{\ell}\Bigr)\Bigr\|_{L^{p/2}} \\
        &\lesssim 2^{(\frac{6}{q}-\frac{3}{q})k}
            \sum_{k'\geq k-2} \|\dot{\Delta}_{k'}f^{\ell}\|_{L^{q}}
            \|\dot{\Delta}_{k'}g^{\ell}\|_{L^{q}} \\
        &\lesssim 2^{(\frac{6}{q}-\frac{3}{q})k}
            \sum_{k'\geq k-2} 2^{(6-\frac{18}{q})k'}
            2^{(\frac{9}{q}-4)k'}\|\dot{\Delta}_{k'}f^{\ell}\|_{L^{q}}
            \; 2^{(-2+\frac{9}{q})k'}\|\dot{\Delta}_{k'}g^{\ell}\|_{L^{q}} .
    \end{align*}
The conditions $ \frac{1}{p} + \frac{2}{q}>1$ and $p\leq 2q$ (which can imply that $q < 3$) yields
    \begin{equation}\label{mm14}
        \|(R(f^{\ell},g^{\ell}))^{\ell}\|_{\tilde{L}_{t}^{1}(\dot{B}_{q,\infty}^{\frac{15}{q}-6})}
        \lesssim \|f^{\ell}\|_{\tilde{L}_{t}^{\infty}(\dot{B}_{q,\infty}^{\frac{9}{q}-4})}\|g^{\ell}\|_{\tilde{L}_{t}^{1}(\dot{B}_{q,\infty}^{\frac{9}{q}-2})}.
    \end{equation}
Finally, collecting all the estimates obtained for the paraproduct and remainder terms, and using the hypotheses
    $$\frac{3}{q}-\frac{3}{p}\leq\frac{9}{4q} - \frac{3}{4p} < 1 < \frac{2}{q} + \frac{1}{p},$$
summing \eqref{mm11}, \eqref{mm12}, \eqref{mm13}, \eqref{mm14}, and \eqref{mm00}, we obtain
    \begin{equation*}
        \begin{aligned}
            &\Omega^{\,3-\frac{6}{q}}
                \bigl\|(\operatorname{div}(f \otimes g))^{\ell}\bigr\|_{\tilde{L}_{t}^{1}
                (\dot{B}_{q,\infty}^{\frac{15}{q}-7})}\lesssim\|f\|_{X_{q,p}^{\Omega}} \|g\|_{X_{q,p}^{\Omega}},
        \end{aligned}
    \end{equation*}
from which, we obtain the desired results  Lemma \ref{uuulow}.

\subsection{Well-posedness}\label{wellposedness}
 Local well-posedness is classical, we omit the details. To prove global well-posedness, in light of Proposition~\ref{linearesti}, we have
     \begin{equation*}
        \begin{aligned}
            \|e^{tA_\Omega} u_{0,\Omega}\|_{_{X_{q,p}^{\Omega}}} 
           \lesssim  \Omega^{3-\frac{6}{q}}\|u_{0,\Omega}^{\ell}\|_{\dot{B}_{q,\infty}^{-7+\frac{15}{q}}}
           +\|u_{0,\Omega}^{h}\|_{\dot{B}_{p,\infty}^{-1 + \frac{3}{p}}}:= \|u_{0,\Omega}\|_{X_{0,q,p}^{\Omega}}\ll1.
        \end{aligned}
    \end{equation*}
Consequently utilizing Proposition \ref{besti}, Banach fixed point theorem ensures that the equation
$
u_{\Omega} = e^{tA_\Omega} u_{0,\Omega} + \Psi(u_{\Omega},u_{\Omega})
$
has a unique solution $u\in X_{q,p}^{\Omega}$, here we can directly choose $T=\infty$.

\section{Proof of Theorem \ref{strong convergence}}

In this section, we prove that \(u_\Omega\) converges strongly to the solution \(v\) of the non-rotating Navier--Stokes equations in the critical space as \(\Omega\to0^+\) for some $T>0$.  Let \(u_\Omega\) be the unique  local solution of \eqref{rns} with initial data $u_{0,\Omega}\in X_{0,q,p}^{\Omega}$ given by Theorem~ \ref{wp}, and let \(v\) be the solution of \eqref{INS} with initial data \(v_0\). 
Since \[
\|u_{\Omega,0}-v_0\|_{\dot B_{p,\infty}^{-1+\frac{3}{p}}} \to 0 \quad (\Omega \to 0^+),
\]  when $\delta\ll 1, \, \, \Omega\ll 1$, 
\[
\|v_{0}\|_{\dot B_{p,\infty}^{-1+\frac{3}{p}}}\lesssim \|u_{\Omega,0}\|_{\dot B_{p,\infty}^{-1+\frac{3}{p}}}+\delta^{2}\lesssim\|u_{\Omega,0}\|_{X_{0,q,p}^{\Omega}} +\delta^{2},
\]
by local well-posedness results of Theorem~\ref{wp}, and local well-posedness theory  of \eqref{INS} (see \cite{Cannone1997}), we can obtain that there are unique solutions $ u_{\Omega}$ and  $v$ for some common $T>0$ and satisfy the following uniform estimates
\begin{equation}\label{unifrom1}
\begin{aligned} 
\|u_\Omega\|_{\tilde{L}_{T}^{\infty}(\dot{B}_{p,\infty}^{-1+\frac{3}{p}})}\lesssim \delta^{2}+\|u_{0,\Omega}\|_{X_{0,q,p}^{\Omega}},\ \ 
    \|u_\Omega\|_{\tilde{L}_{T}^{1}(\dot{B}_{p,\infty}^{1+\frac{3}{p}})}\lesssim \delta
 \end{aligned}
\end{equation}
and
 \begin{equation}\label{unifrom2}
\begin{aligned}
&\|v\|_{\tilde{L}_{T}^{\infty}(\dot{B}_{p,\infty}^{-1+\frac{3}{p}})}\lesssim \delta^{2}+\|v_{0}\|_{\dot B_{p,\infty}^{-1+\frac{3}{p}}}\lesssim\delta^{2}+\|u_{0,\Omega}\|_{X_{0,q,p}^{\Omega}}, \,  \,  \,   \|v\|_{\tilde{L}_{T}^{1}(\dot{B}_{p,\infty}^{1+\frac{3}{p})}}\lesssim  \delta.
 \end{aligned}
\end{equation}
Set \(w_\Omega:=u_\Omega-v\), we obtain 
\[
\partial_t w_\Omega -\Delta w_\Omega + \Omega e_3\times u_\Omega + \PP\big( u_\Omega\cdot\nabla u_\Omega - v\cdot\nabla v\big)=0,\qquad {\rm div} w_\Omega=0,
\]
where \(\PP\) is the Leray projector. Applying Duhamel's formula,
\begin{equation}
\begin{aligned}
w_\Omega(t)= e^{t\Delta} w_\Omega(0):=e^{t\Delta}w_\Omega(0)+\mathcal{C}(t)+\mathcal{N}(t),
\end{aligned}
\end{equation}
where
\[
\mathcal{C}(t):=-\int_0^t e^{(t-s)\Delta}\mathbb{P}\bigl(\Omega e_3\times u_\Omega(s)\bigr)\,ds
\]
and
\[
\mathcal{N}(t):=-\int_0^t e^{(t-s)\Delta}\mathbb{P}\bigl[\operatorname{div}(w_\Omega\otimes u_\Omega)(s)+\operatorname{div}(v\otimes w_\Omega)(s)\bigr]\,ds.
\]
Let
\[
\|w_\Omega\|_{\mathcal{E}_T}:=\|w_\Omega\|_{\tilde{L}_T^\infty(\dot{B}_{p,\infty}^{-1+\frac{3}{p}})}+\|w_\Omega\|_{\tilde{L}_{T}^{1}(\dot{B}_{p,\infty}^{1+\frac{3}{p}})}.
\]
Our goal is to show that for some $T>0$,
\[
\lim_{\Omega\to0^+}\|w_\Omega\|_{\mathcal{E}_T}=0.
\]
For the term $e^{t\Delta}w_\Omega(0)$, we obtain
\begin{equation}\label{eslinear}
\begin{aligned}
\|e^{t\Delta}w_\Omega(0)\|_{\mathcal{E}_T}\lesssim\|w_\Omega(0)\|_{\dot{B}_{p,\infty}^{-1+3/p}}.
\end{aligned}
\end{equation}
Using \eqref{unifrom1} we have
\begin{equation}\label{escoriolis}
\begin{aligned}
\|\mathcal{C}\|_{\mathcal{E}_T}\lesssim\Omega\|u_\Omega\|_{\tilde{L}_T^1(\dot{B}_{p,\infty}^{-1+3/p})}\le\Omega T\|u_\Omega\|_{\tilde{L}_T^\infty(\dot{B}_{p,\infty}^{-1+3/p})}
\lesssim \Omega T(\delta^{2} +\|u_{0,\Omega}\|_{X_{0,q,p}^{\Omega}}).
\end{aligned}
\end{equation}
 Applying \eqref{unifrom1}, \eqref{unifrom2}, Proposition~\ref{linearesti} and Proposition \ref{besti} imply that 
\begin{equation}\label{esnonlinear}
\begin{aligned}
&\|\mathcal{N}\|_{\mathcal{E}_T}\lesssim\|\bigl[\operatorname{div}(w_\Omega\otimes u_\Omega)(s)+\operatorname{div}(v\otimes w_\Omega)(s)\bigr]\|_{L_T^1(\dot{B}_{p,\infty}^{-1+3/p})}\\
&\lesssim \|w_\Omega\|_{\tilde{L}_{T}^{2}(\dot{B}_{p,\infty}^{\frac{3}{p}})}\|u_\Omega\|_{\tilde{L}_{T}^{2}(\dot{B}_{p,\infty}^{\frac{3}{p}})}+ \|w_\Omega\|_{\tilde{L}_{T}^{2}(\dot{B}_{p,\infty}^{\frac{3}{p}})}\|v\|_{\tilde{L}_{T}^{2}(\dot{B}_{p,\infty}^{\frac{3}{p}})}\\
&\lesssim \|w_\Omega\|_{\mathcal{E}_T}(\|u_\Omega\|_{\tilde{L}_{T}^{1}(\dot{B}_{p,\infty}^{1+\frac{3}{p}})}^{\frac{1}{2}}\|u_\Omega\|_{\tilde{L}_{T}^{\infty}(\dot{B}_{p,\infty}^{-1+\frac{3}{p}})}^{\frac{1}{2}}+\|v\|_{\tilde{L}_{T}^{1}(\dot{B}_{p,\infty}^{1+\frac{3}{p}})}^{\frac{1}{2}}
\|v\|_{\tilde{L}_{T}^{\infty}(\dot{B}_{p,\infty}^{-1+\frac{3}{p}})}^{\frac{1}{2}})
\\
&\lesssim(\delta^{3}+\delta\|u_{0,\Omega}\|_{X_{0,q,p}^{\Omega}})^{\frac{1}{2}}\|w_\Omega\|_{\mathcal{E}_T}.
\end{aligned}
\end{equation}
Finally, combining \eqref{eslinear},  \eqref{escoriolis} and \eqref{esnonlinear}, we arrive at
\[
\|w_\Omega\|_{\mathcal{E}_T}\le K\|w(0)\|_{\dot{B}_{p,\infty}^{-1+3/p}}+ K\Omega T(\delta^{2}+\|u_{0,\Omega}\|_{X_{0,q,p}^{\Omega}})+K(\delta^{3}+\delta\|u_{0,\Omega}\|_{X_{0,q,p}^{\Omega}})^{\frac{1}{2}}\|w_\Omega\|_{\mathcal{E}_T}
\]
for some constant $K>0$. Since $\delta\ll 1$, we can get 
\[
\|w_\Omega\|_{\mathcal{E}_T}\le 2K\|w_\Omega(0)\|_{\dot{B}_{p,\infty}^{-1+3/p}}+2K\Omega T(\delta^{2} +\|u_{0,\Omega}\|_{X_{0,q,p}^{\Omega}}).
\]
Since $\|w_\Omega(0)\|_{\dot B_{p,\infty}^{-1+\frac{3}{p}}}$ converges to 0 as $\Omega \to 0^+$, hence for some $T>0$, we obtain
\[
\lim_{\Omega\to0^+}\|w_\Omega\|_{\mathcal{E}_T}=0.
\]
That is, $u_\Omega$ converges strongly to $v$ in the space
 $$
\tilde{L}_{T}^{\infty}(\dot{B}_{p,\infty}^{-1+\frac{3}{p}})\cap
\tilde{L}_{T}^{1}(\dot{B}_{p,\infty}^{1+\frac{3}{p}}),$$
 which completes the proof of the vanishing rotational limit.
\section{Appendix}
In this appendix, we would present some basic functional analysis tools.
Let $\dot\Delta_j$ for $j\in\Z$ be the Fourier multiplier defined in Subsection 1.2. the homogeneous Besov space is defined as:

\hspace*{\fill}
\begin{definition}\label{defn2.1}
For $s\in \mathbb{R}$ and $1\leq p,r\leq \infty$, the homogeneous Besov spaces $\dot{B}^s_{p,r}$ are defined by
$$\dot{B}^s_{p,r}:=\Big\{f\in \mathcal{S}'_{0}:\|f\|_{\dot{B}^s_{p,r}}<\infty  \Big\},$$
where
\begin{equation*}
\|f\|_{\dot{B}^s_{p,r}}:=\Big(\sum_{q\in\mathbb{Z}}(2^{qs}\|\dot{\Delta}_qf\|_{L^{p}})^{r}\Big)^{1/r}
\end{equation*}
with the usual convention if $r=\infty$.
\end{definition}

We often use the following classical properties of Besov spaces (see \cite{BCD}):

$\bullet$ \ \emph{Scaling invariance:} For any $\sigma\in \mathbb{R}$ and $(p,r)\in
[1,\infty ]^{2}$, there exists a constant $C=C(\sigma,p,r,d)$ such that for all $\lambda >0$ and $f\in \dot{B}_{p,r}^{\sigma}$, we have
$$
C^{-1}\lambda ^{\sigma-\frac {d}{p}}\|f\|_{\dot{B}_{p,r}^{\sigma}}
\leq \|f(\lambda \,\cdot)\|_{\dot{B}_{p,r}^{\sigma}}\leq C\lambda ^{\sigma-\frac {d}{p}}\|f\|_{\dot{B}_{p,r}^{\sigma}}.
$$

$\bullet$ \ \emph{Completeness:} $\dot{B}^{\sigma}_{p,r}$ is a Banach space whenever $
\sigma<\frac{d}{p}$ or $\sigma\leq \frac{d}{p}$ and $r=1$.

$\bullet$ \ \emph{Interpolation:} The following inequality is satisfied for $1\leq p,r_{1},r_{2}, r\leq \infty, \sigma_{1}\neq \sigma_{2}$ and $\theta \in (0,1)$:
$$\|f\|_{\dot{B}_{p,r}^{\theta \sigma_{1}+(1-\theta )\sigma_{2}}}\lesssim \|f\| _{\dot{B}_{p,r_{1}}^{\sigma_{1}}}^{\theta} \|f\|_{\dot{B}_{p,r_2}^{\sigma_{2}}}^{1-\theta }$$
with $\frac{1}{r}=\frac{\theta}{r_{1}}+\frac{1-\theta}{r_{2}}$.

$\bullet$ \ \emph{Action of Fourier multipliers:} if $F$ is a smooth homogeneous of
degree $m$ function on $\mathbb{R}^{d}\backslash \{0\}$, then
$$F(D):\dot{B}_{p,r}^{\sigma}\rightarrow \dot{B}_{p,r}^{\sigma-m}.$$


The embedding properties will be used several times throughout the paper.
\begin{proposition} \label{Prop2.1}
\begin{itemize}
\item For any $p\in[1,\infty]$, we have the continuous embedding
$\dot{B}^{0}_{p,1}\hookrightarrow L^{p}\hookrightarrow \dot{B}^{0}_{p,\infty}$.
\item If $\sigma\in\mathbb{R}$, $1\leq p_{1}\leq p_{2}\leq\infty$ and $1\leq r_{1}\leq r_{2}\leq\infty$, then
$\dot{B}^{\sigma}_{p_{1},r_{1}}\hookrightarrow
\dot{B}^{\sigma-d\,(\frac{1}{p_{1}}-\frac{1}{p_{2}})}_{p_{2},r_{2}}$.
\item The space $\dot{B}^{d/p}_{p,1}$ is continuously embedded in the set of
bounded continuous functions (vanishing at infinity if $p<\infty$).
\end{itemize}
\end{proposition}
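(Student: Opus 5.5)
The plan is to reduce all three items to two elementary facts about the dyadic blocks $\dot\Delta_j$, and then sum over $j$. The first fact is a uniform $L^p$ bound: $\dot\Delta_j f = h_j * f$ with $h_j(x) = 2^{jd} h(2^j x)$ and $h = \mathcal{F}^{-1}\varphi \in \mathcal{S}$. Since $\|h_j\|_{L^1} = \|h\|_{L^1}$, Young's inequality gives $\|\dot\Delta_j f\|_{L^p} \le \|h\|_{L^1}\|f\|_{L^p}$ for every $p \in [1,\infty]$. The second fact is Bernstein's inequality. Choose $\tilde\varphi \in C_c^\infty(\mathbb{R}^d\setminus\{0\})$ equal to $1$ on the annulus $\operatorname{supp}\varphi$, and set $\tilde h_j = 2^{jd}\tilde h(2^j\cdot)$ with $\tilde h = \mathcal{F}^{-1}\tilde\varphi$. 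Then $\dot\Delta_j f = \tilde h_j * \dot\Delta_j f$. Young's inequality with $1 + \frac{1}{p_2} = \frac1m + \frac{1}{p_1}$ gives
$$\|\dot\Delta_j f\|_{L^{p_2}} \le \|\tilde h_j\|_{L^m}\|\dot\Delta_j f\|_{L^{p_1}} = C\,2^{jd(\frac{1}{p_1}-\frac{1}{p_2})}\|\dot\Delta_j f\|_{L^{p_1}}.$$

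For the first item, the embedding $L^p \hookrightarrow \dot B^0_{p,\infty}$ is immediate from the uniform bound: take the supremum over $j$. For $\dot B^0_{p,1} \hookrightarrow L^p$, let $f \in \dot B^0_{p,1}$. The series $\sum_j \dot\Delta_j f$ then converges absolutely in $L^p$, with sum $g$ satisfying $\|g\|_{L^p} \le \|f\|_{\dot B^0_{p,1}}$. It remains to identify $g$ with $f$. By the definition of $\mathcal{S}'_0$ (which contains exactly the tempered distributions with $\dot S_j f \to 0$ as $j \to -\infty$), we have $f = \sum_j \dot\Delta_j f$ in $\mathcal{S}'$. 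Convergence in $L^p$ implies convergence in $\mathcal{S}'$, so $f = g$. I expect this identification to be the only delicate point. It is exactly where the convention of working modulo polynomials (that is, in $\mathcal{S}'_0$) is used, and I would spell it out carefully.

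For the second item, apply Bernstein's inequality to each block and multiply by $2^{j(\sigma - d(\frac1{p_1}-\frac1{p_2}))}$. This gives
$$2^{j(\sigma-d(\frac{1}{p_1}-\frac{1}{p_2}))}\|\dot\Delta_j f\|_{L^{p_2}} \lesssim 2^{j\sigma}\|\dot\Delta_j f\|_{L^{p_1}}.$$
Taking $\ell^{r_2}$ norms and using $\ell^{r_1} \subset \ell^{r_2}$ for $r_1 \le r_2$ yields the embedding.

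For the third item, Bernstein's inequality with $p_2 = \infty$ gives $\sum_j \|\dot\Delta_j f\|_{L^\infty} \lesssim \|f\|_{\dot B^{d/p}_{p,1}}$. Each $\dot\Delta_j f = h_j * \dot\Delta_j f$ is smooth and bounded. When $p < \infty$, it also vanishes at infinity: it is the convolution of an $L^p$ function with a Schwartz kernel, which one checks by approximating the $L^p$ factor by $C_c$ functions. The series therefore converges uniformly, and its limit lies in $C_b$, or in $C_0$ when $p < \infty$, since these spaces are closed under uniform limits. As in the first item, the limit coincides with $f$ in $\mathcal{S}'$ because $f \in \mathcal{S}'_0$. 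This proves continuity and boundedness with $\|f\|_{L^\infty} \lesssim \|f\|_{\dot B^{d/p}_{p,1}}$.
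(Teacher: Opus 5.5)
Your proof is correct and is the standard argument behind this proposition. The paper gives no proof and only quotes it from Bahouri--Chemin--Danchin, where it rests on the same three ingredients you use: uniform $L^p$ bounds for $\dot\Delta_j$, Bernstein's inequality obtained from a fattened cutoff $\tilde\varphi$, and identification of $\sum_j\dot\Delta_j f$ with $f$ through the $\mathcal{S}'_0$ convention.

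Two small fixes are needed. In the third item you mean $\dot\Delta_j f=\tilde h_j*\dot\Delta_j f$, not $h_j*\dot\Delta_j f$. In the first item you should also check that $f\in L^p$ lies in $\mathcal{S}'_0$; for $p<\infty$ this follows from $\|\dot S_j f\|_{L^\infty}\lesssim 2^{jd/p}\|f\|_{L^p}\to 0$. For $p=\infty$ nonzero constants show that $L^\infty\not\subset\mathcal{S}'_0$, so $L^\infty\hookrightarrow\dot B^0_{\infty,\infty}$ can only mean the bound $\|f\|_{\dot B^0_{\infty,\infty}}\lesssim\|f\|_{L^\infty}$.
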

In addition, we also recall the classical \emph{Bernstein inequality}
\begin{equation}\label{Eq:2.6}
\|D^{k}f\|_{L^{b}}
\leq C^{1+k} \lambda^{k+d(\frac{1}{a}-\frac{1}{b})}\|f\|_{L^{a}}
\end{equation}
that holds for all function $f$ such that $\mathrm{Supp}\,\mathcal{F}f\subset\left\{\xi\in \mathbb{R}^{d}: |\xi|\leq R\lambda\right\}$ for some $R>0$
and $\lambda>0$, if $k\in\mathbb{N}$ and $1\leq a\leq b\leq\infty$.

More generally, if we assume $f$ to satisfy $\mathrm{Supp}\,\mathcal{F}f\subset\{\xi\in \mathbb{R}^{d}:
R_{1}\lambda\leq|\xi|\leq R_{2}\lambda\}$ for some $0<R_{1}<R_{2}$ and $\lambda>0$, then for any smooth
homogeneous of degree $m$ function $A$ on $\mathbb{R}^d\setminus\{0\}$ and $1\leq a\leq\infty$, we have
(see e.g. Lemma 2.2 in \cite{BCD}):
\begin{equation}\label{Eq:2.7}
\|A(D)f\|_{L^{a}}\approx\lambda^{m}\|f\|_{L^{a}}.
\end{equation}
An obvious  consequence of (\ref{Eq:2.6}) and (\ref{Eq:2.7}) is that
$\|D^{k}f\|_{\dot{B}^{s}_{p, r}}\thickapprox \|f\|_{\dot{B}^{s+k}_{p, r}}$ for all $k\in\mathbb{N}$.

Moreover, a class of mixed space-time Besov spaces are also used when studying the evolution PDEs, which were firstly
proposed by \cite{BCD}.
\begin{definition}\label{defn2.2}
For $T>0,s\in \mathbb{R}, 1\leq r,\theta \leq \infty$, the homogeneous Chemin-Lerner spaces $\tilde{L}^\theta_{T}(\dot{B}^s_{p,r})$ are defined by
$$\tilde{L}^\theta_{T}(\dot{B}^s_{p,r}):=\Big\{f\in L^\theta(0,T;\mathcal{S}'_{0}) :\|f\|_{\tilde{L}^\theta_{T}(\dot{B}^s_{p,r})}<\infty  \Big\}, $$
where
$$\|f\|_{\tilde{L}^\theta_{T}(\dot{B}^s_{p,r})}:=\Big(\sum_{q\in \mathbb{Z}}(2^{qs}\|\dot{\Delta}_qf\|_{L^\theta_{T}(L^{p})})^{r}\Big)^{1/r} $$
with the usual convention if  $r=\infty$.
\end{definition}

The Chemin-Lerner space $\widetilde{L}^{\theta}_{T}(\dot{B}^{s}_{p,r})$ may be linked with the standard spaces $L_{T}^{\theta}(\dot{B}^{s}_{p,r})$ by means of Minkowski's inequality.
\begin{remark}\label{Rem2.1}
It holds that
\[
\|f\|_{\widetilde{L}^{\theta}_{T}(\dot{B}^{s}_{p,r})} \leq \|f\|_{L^{\theta}_{T}(\dot{B}^{s}_{p,r})}
\quad \text{if } r \geq \theta; \qquad
\|f\|_{\widetilde{L}^{\theta}_{T}(\dot{B}^{s}_{p,r})} \geq \|f\|_{L^{\theta}_{T}(\dot{B}^{s}_{p,r})}
\quad \text{if } r \leq \theta.
\]
\end{remark}

\bigbreak\bigbreak\noindent
\noindent 
\textbf{Acknowledgments.~} Z. Guo is the recipient of an Australian Research Council Future Fellowship (project number FT230100588) funded by the Australian Government and is also supported by ARC DP260100485. M. Yang is supported by the National Natural Science Foundation of China (project number 12161041) and the Jiangxi Province Natural Science Foundation (project number 20252BAC250004).

\noindent
\textbf{Conflict of interest.} The authors do not have any possible conflicts of interest.

\noindent
\textbf{Data availability statement.}
 Data sharing is not applicable to this article, as no data sets were generated or analyzed during the current study.




\end{document}